\documentclass[10pt,a4paper]{amsart}
\usepackage{amssymb,amsmath,amsfonts}

\textwidth=16.000cm \textheight=22.000cm \topmargin=0.00cm
\oddsidemargin=0.00cm \evensidemargin=0.00cm \headheight=14.4pt
\headsep=1.2500cm \numberwithin{equation}{section}
\hyphenation{semi-stable} \emergencystretch=11pt

\newcommand{\set}[1]{\left\{#1\right\}}
\newcommand{\A}{UP(I,\mathcal{A})}
\newtheorem{Theorem}{Theorem}[section]
\newtheorem{Proposition}[Theorem]{Proposition}
\newtheorem{cor}[Theorem]{Corollary}
\newtheorem{lemma}[Theorem]{Lemma}
\theoremstyle{remark}
\newtheorem{Definition}[Theorem]{Definition}
\newtheorem{Example}[Theorem]{Example}

\newtheorem{Remark}[Theorem]{Remark}

\begin{document}
\title{strong pseudo-Connes amenability of dual Banach algebras}

\author[S. F. Shariati]{S. F. Shariati}

\address{Faculty of Mathematics and Computer Science,
	Amirkabir University of Technology, 424 Hafez Avenue, 15914
	Tehran, Iran.}

\email{f.shariati@aut.ac.ir}

\author[A. Pourabbas]{A. Pourabbas}
\email{arpabbas@aut.ac.ir}

\author[A. Sahami]{A. Sahami}
\address{Department of Mathematics Faculty of Basic Science, Ilam University, P.O. Box 69315-516 Ilam, Iran.}
\email{a.sahami@ilam.ac.ir}

\keywords{strong pseudo-Connes amenability, pseudo-Connes amenability, dual Banach algebras, Semigroup algebras, ultra central approximate identity.}

\subjclass[2010]{Primary 46M10,  46H05  Secondary 43A07, 43A20,.}

\begin{abstract}
In this paper, we introduce the new notion of strong pseudo-Connes amenability for dual Banach algebras. We study the relation between this new notion to the various notions of Connes amenability. Also we show that for every non-empty set $I$, $M_I(\mathbb{C})$ is strong pseudo-Connes amenable if and only if $I$ is finite. We provide some examples of certain dual Banach algebras and we study its strong pseudo-Connes amenability. 

In the last section, we investigate the property ultra central approximate identity for a Banach algebra $\mathcal{A}$ and its second dual $\mathcal{A}^{**}$. Also we show that for a left cancellative regular semigroup $S$, ${\ell^{1}(S)}^{**}$ has an ultra
 central approximat identity if and only if $S$ is a group. Finally we study this property for $\varphi$-Lau product Banach algebras and the module extension Banach algebras.
\end{abstract}
\maketitle
\section{Introduction and Preliminaries}
The concept of amenability for Banach algebras were first introduced by B. E. Johnson \cite{Runde:2002}. A Banach algebra $\mathcal{A}$ is amenable if and only if there exists a bounded net ($m_{\alpha}$) in ${\mathcal{A}}\hat{\otimes}{\mathcal{A}}$ such that $a\cdot m_{\alpha}-m_{\alpha}\cdot a{\rightarrow}0$ and $\pi_{\mathcal{A}}(m_{\alpha})a{\rightarrow}a$ for every $a\in{\mathcal{A}}$. By removing the boundedness condition in the definition of amenability, Ghahramani and Zhang introduced the notion of pseudo amenability \cite{ghah:07}. Recently Sahami introduced a notion of amenability, named strong pseudo-amenability \cite{Sah:18}. A Banach algebra $\mathcal{A}$ is called strong pseudo-amenable, if there exists a (not necessarily bounded) net ($m_{\alpha}$) in $({\mathcal{A}}\hat{\otimes}{\mathcal{A}})^{\ast\ast}$ such that
\begin{equation*}
a\cdot m_{\alpha}-m_{\alpha}\cdot a\rightarrow0,\quad a\pi^{**}_{\mathcal{A}}(m_{\alpha})=\pi^{**}_{\mathcal{A}}(m_{\alpha})a\rightarrow a\quad (a\in {\mathcal{A}}).
\end{equation*}
The class of dual Banach algebras were introduced by Runde \cite{Runde:2001}. Let $\mathcal{A}$ be a Banach algebra and let $E$ be a Banach $\mathcal{A}$-bimodule. An $\mathcal{A}$-bimodule $E$ is called dual if there is a closed submodule ${E}_{\ast}$ of ${E}^{\ast}$ such that $E=(E_{\ast})^{\ast}$. The Banach algebra $\mathcal{A}$ is called dual if it is dual as a Banach $\mathcal{A}$-bimodule. For a given dual Banach algebra $\mathcal{A}$ and a Banach $\mathcal{A}$-bimodule $E$, we denote by $\sigma{wc}(E)$, the set of all elements $x\in{E}$ such that the module maps $\mathcal{A}\rightarrow{E}$; ${a}\mapsto{a}\cdot{x}$ and ${a}\mapsto{x}\cdot{a}$
are $wk^\ast$-$wk$-continuous, which is a closed submodule of $E$. Since $\sigma{wc}(\mathcal{A}_{\ast})=\mathcal{A}_{\ast}$, the adjoint of $\pi$ maps $\mathcal{A}_{\ast}$ into $\sigma{wc}(\mathcal{A}\hat{\otimes}\mathcal{A})^{\ast}$. Therefore, $\pi^{\ast\ast}$ drops to an $\mathcal{A}$-bimodule morphism $\pi_{\sigma{wc}}:(\sigma{wc}(\mathcal{A}\hat{\otimes}\mathcal{A})^{\ast})^{\ast}\rightarrow\mathcal{A}$. A suitable concept of amenability for dual Banach algebras is the Connes amenability. This notion under different name, for the first time was introduced by Johnson, Kadison, and Ringrose for von Neumann algebras \cite{John:72}. The concept of Connes amenability for the larger class of dual Banach algebras was later extended by Runde \cite{Runde:2001}. A dual Banach algebra $\mathcal{A}$ is called Connes amenable if and only if $\mathcal{A}$ has a $\sigma{wc}$-virtual diagonal, that is, there exists an element $M\in{(\sigma{wc}(\mathcal{A}\hat{\otimes}\mathcal{A})^{*})^*}$ such that	$a\cdot{M}=M\cdot{a}$ and ${a}\pi_{\sigma{wc}}(M)=a$ for every $a\in{\mathcal{A}}$ \cite{Runde:2004}.

Mahmoodi introduced and studied the notion of pseudo-Connes amenability for dual Banach algebras \cite{Mahmoodi:14}. A dual Banach algebra $\mathcal{A}$ is called pseudo-Connes amenable if there exists a net $(m_{\alpha})$ in $\mathcal{A}\hat{\otimes}\mathcal{A}$ such that for every $a\in{\mathcal{A}}$, $a\cdot m_{\alpha}-m_{\alpha}\cdot a\overset{wk^*}{\rightarrow}0$ in $(\sigma{wc}(\mathcal{A}\hat{\otimes}\mathcal{A})^{\ast})^{\ast}$ and $a\pi_{\sigma{wc}}(m_{\alpha})\overset{wk^*}{\rightarrow}a$ in $\mathcal{A}$ \cite[Definition 4.1]{Mahmoodi:14}. He showed that in
general the concepts of pseudo-Connes amenability and Connes amenability are distinct. Indeed he showed that $\ell^1(\mathbb{N},\max)$ is pseudo-Connes amenable but it is not Connes amenable \cite[Example 6.1]{Mahmoodi:14}.

By consideration these notions, we introduce a modification notion of pseudo-Connes amenability for dual Banach algebra which is weaker than Connes amenable and stronger than pseudo-Connes amenable:
\begin{Definition}\label{d1}
	A dual Banach algebra $\mathcal{A}$ is called strong pseudo-Connes amenable, if there exists a (not necessarily bounded) net ($m_{\alpha}$) in $(\sigma{wc}({\mathcal{A}}\hat{\otimes}{\mathcal{A}})^{\ast})^{\ast}$ such that for every $a\in{\mathcal{A}}$,
	\begin{enumerate}
		\item [(i)] $a\cdot m_{\alpha}-m_{\alpha}\cdot a\overset{wk^*}{\longrightarrow}0$ in $(\sigma{wc}(\mathcal{A}\hat{\otimes}\mathcal{A})^{\ast})^*$, and
		\item [(ii)] $a\pi_{\sigma{wc}}(m_{\alpha})=\pi_{\sigma{wc}}(m_{\alpha})a\overset{wk^*}{\longrightarrow}a$ in $\mathcal{A}$.
	\end{enumerate}
\end{Definition}
In section 2, we investigate the relation between this new notion to the various notions of Connes amenability. In section 3, we show that for the Banach algebra of $I\times I$-matrices over $\mathbb{C}$, with finite $\ell^1$-norm and matrix multiplication, $M_{I}(\mathbb{C})$ is strong pseudo-Connes amenable if and only if $I$ is finite. Also we show that for the set of all $I\times{I}$-upper triangular matrices, $\A$ under this new notion $I$ must be singleton. In section 4 we provide some examples of certain dual Banach algebras and we study its strong pseudo-Connes amenability. In particular, we show that $\ell^1(\mathbb{N},\max)$ is strong pseudo-Connes amenable but it is not Connes amenable.

Recently Sahami introduced the notion of ultra central approximate identity for Banach algebras
which is a generalization of the bounded approximate identity and the central approximate identity \cite{Sahami:18}. We say that A Banach algebra $\mathcal{A}$ has an ultra central approximate identity if
there exists a net $(e_\alpha)$ in $\mathcal{A}^{**}$ such that $ae_{\alpha}=e_\alpha a$ and $e_\alpha a\rightarrow a$; for every $a\in{\mathcal{A}}$. He
characterized the existence of an ultra central approximate identity for the semigroup algebra $\ell^1(S)$,
where $S$ is a uniformly locally finite inverse semigroup. Also he showed that for the Brandt semigroup
$S = M_0(G;I)$ over a non-empty set $I$, $\ell^1(S)$ has an ultra central approximate identity if
and only if $I$ is finite \cite[Theorem 2.8]{Sahami:18}. \\
In section 5 we study the relation between the property ultra central approximate identity and the notion $\varphi$-inner amenability for a Banach algebra $\mathcal{A}$ and $\mathcal{A}^{**}$. Also we show that for a left cancellative regular semigroup $S$, ${\ell^{1}(S)}^{**}$ has an ultra central approximat identity if and only if $S$ is a group. Finally as an application, we investigate the property of ultra central approximate identity for $\varphi$-Lau product Banach algebras and the module extension Banach algebras.
\section{Strong pseudo-Connes amenability}
\begin{Remark}\label{R1}
	\cite[Remark 2.1]{Sha:17} Let $\mathcal{A}$ be a dual Banach algebra and let $E$ be a Banach $\mathcal{A}$-bimodule. Since $\sigma wc(E^*)$ is a closed $\mathcal{A}$-submodule of $E^{*}$, we have a quotient map $q:E^{\ast\ast}\longrightarrow\sigma{wc}(E^{\ast})^{\ast}$ is defined by $q(M)=M\vert_{\sigma wc(E^*)}$ for every $M\in{E^{**}}$.
\end{Remark}
\begin{Proposition}\label{p2.2}
Let $\mathcal{A}$ be a strong pseudo-Connes amenable dual Banach algebra. Then $\mathcal{A}$ is pseudo-Connes amenable.
\end{Proposition}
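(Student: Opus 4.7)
The strategy is to approximate, via weak$^*$-density, a strong pseudo-Connes amenable net in $(\sigma wc(\mathcal{A}\hat{\otimes}\mathcal{A})^{\ast})^{\ast}$ by elements of $\mathcal{A}\hat{\otimes}\mathcal{A}$, and to extract from this a single net in $\mathcal{A}\hat{\otimes}\mathcal{A}$ witnessing pseudo-Connes amenability by a standard iterated weak$^*$-limit/diagonal argument.

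Two preliminary facts are needed. First, the canonical embedding $\mathcal{A}\hat{\otimes}\mathcal{A}\hookrightarrow(\sigma wc(\mathcal{A}\hat{\otimes}\mathcal{A})^{\ast})^{\ast}$ (via evaluation on $\sigma wc(\mathcal{A}\hat{\otimes}\mathcal{A})^{\ast}$) has weak$^*$-dense image: if $\phi\in\sigma wc(\mathcal{A}\hat{\otimes}\mathcal{A})^{\ast}$ annihilates $\mathcal{A}\hat{\otimes}\mathcal{A}$ then $\phi=0$. Second, the module actions $m\mapsto a\cdot m$ and $m\mapsto m\cdot a$ on $(\sigma wc(\mathcal{A}\hat{\otimes}\mathcal{A})^{\ast})^{\ast}$ are weak$^*$-weak$^*$ continuous because this space is a dual $\mathcal{A}$-bimodule with predual $\sigma wc(\mathcal{A}\hat{\otimes}\mathcal{A})^{\ast}$, and $\pi_{\sigma wc}$ is weak$^*$-weak$^*$ continuous because it is the adjoint of the bounded map $\mathcal{A}_{\ast}\to\sigma wc(\mathcal{A}\hat{\otimes}\mathcal{A})^{\ast}$ induced by $\pi^{\ast}$, as recalled in the introduction.

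Now fix a net $(m_{\alpha})$ as in Definition \ref{d1}. By the density statement, for each $\alpha$ there is a net $(n_{\alpha,\beta})_{\beta}\subset\mathcal{A}\hat{\otimes}\mathcal{A}$ with $n_{\alpha,\beta}\overset{wk^*}{\rightarrow}m_{\alpha}$ in $(\sigma wc(\mathcal{A}\hat{\otimes}\mathcal{A})^{\ast})^{\ast}$. Applying the weak$^*$-continuity of the operations $m\mapsto a\cdot m-m\cdot a$ and $m\mapsto a\pi_{\sigma wc}(m)$, for every $a\in\mathcal{A}$ one gets
\[a\cdot n_{\alpha,\beta}-n_{\alpha,\beta}\cdot a\overset{wk^*}{\rightarrow}a\cdot m_{\alpha}-m_{\alpha}\cdot a,\qquad a\pi_{\sigma wc}(n_{\alpha,\beta})\overset{wk^*}{\rightarrow}a\pi_{\sigma wc}(m_{\alpha}),\]
in $(\sigma wc(\mathcal{A}\hat{\otimes}\mathcal{A})^{\ast})^{\ast}$ and $\mathcal{A}$ respectively. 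A standard diagonal argument now builds the desired net: index over tuples $(F,\Phi,\Psi,\varepsilon)$ of finite $F\subset\mathcal{A}$, $\Phi\subset\sigma wc(\mathcal{A}\hat{\otimes}\mathcal{A})^{\ast}$, $\Psi\subset\mathcal{A}_{\ast}$ and $\varepsilon>0$; first choose $\alpha$ so that Definition \ref{d1}(i),(ii) hold within $\varepsilon/2$ against $\Phi,\Psi,F$, then choose $\beta$ so that $n_{\alpha,\beta}$ approximates $m_{\alpha}$ within $\varepsilon/2$ against the finite family of functionals obtained from $\Phi,\Psi$ and the action of elements of $F$. The triangle inequality shows that the resulting diagonal net, viewed as elements of $\mathcal{A}\hat{\otimes}\mathcal{A}$, satisfies the defining conditions of pseudo-Connes amenability.

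The main obstacle is bookkeeping, not substance: one has to verify that precomposing $\phi\in\sigma wc(\mathcal{A}\hat{\otimes}\mathcal{A})^{\ast}$ with $m\mapsto a\cdot m-m\cdot a$, and $\psi\in\mathcal{A}_{\ast}$ with $m\mapsto a\pi_{\sigma wc}(m)$, produces bounded functionals in the respective preduals, so that weak$^*$-approximation $n_{\alpha,\beta}\to m_{\alpha}$ transfers to these expressions. This is exactly the content of the weak$^*$-continuity observations above, so the remainder is a routine diagonalisation.
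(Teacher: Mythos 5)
Your proposal is correct and follows essentially the same route as the paper: both obtain the weak$^*$-dense image of $\mathcal{A}\hat{\otimes}\mathcal{A}$ in $(\sigma wc(\mathcal{A}\hat{\otimes}\mathcal{A})^{\ast})^{\ast}$ (the paper via the quotient map $q$ of Remark \ref{R1} composed with the canonical inclusion and Goldstein's theorem), approximate each $m_{\alpha}$ by a net $(u^{\alpha}_{\beta})$ in $\mathcal{A}\hat{\otimes}\mathcal{A}$, and pass from the iterated weak$^*$-limits to a single net. The only cosmetic difference is that you carry out the diagonalisation by hand over tuples $(F,\Phi,\Psi,\varepsilon)$, whereas the paper invokes Kelley's iterated limit theorem over the product directed set $I\times\Theta^{I}$.
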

\begin{proof}
Since $\mathcal{A}$ is strong pseudo-Connes amenable, there exists a net $(m_{\alpha})_{\alpha\in{I}}$ in $(\sigma{wc}({\mathcal{A}}\hat{\otimes}{\mathcal{A}})^{\ast})^{\ast}$ such that $a\cdot m_{\alpha}-m_{\alpha}\cdot a\overset{wk^*}{\longrightarrow}0$ and
 $a\pi_{\sigma{wc}}(m_{\alpha})=\pi_{\sigma{wc}}(m_{\alpha})a\overset{wk^*}{\longrightarrow}a$ for every $a\in{\mathcal{A}}$. Consider the quotient map $q:(\mathcal{A}\hat{\otimes}\mathcal{A})^{\ast\ast}\longrightarrow(\sigma{wc}({\mathcal{A}}\hat{\otimes}{\mathcal{A}})^{\ast})^{\ast}$ as in Remark \ref{R1}. Composing the canonical inclusion
 map $\mathcal{A}\hat{\otimes}\mathcal{A}\hookrightarrow(\mathcal{A}\hat{\otimes}\mathcal{A})^{\ast\ast}$ with $q$, by Goldstein's theorem we obtain a continuous $\mathcal{A}$-bimodule map $\eta:\mathcal{A}\hat{\otimes}\mathcal{A}\longrightarrow(\sigma\omega{c}({\mathcal{A}}\hat{\otimes}{\mathcal{A}})^{\ast})^{\ast}$ which has a $wk^{*}$-dense range. So there exists a net $(u_\beta^\alpha)_{\beta\in{\Theta}}$ in $\mathcal{A}\hat{\otimes}\mathcal{A}$ such that $wk^*\hbox{-}\lim\limits_{\beta}{u}_\beta^\alpha=m_{\alpha}$ in $(\sigma{wc}(\mathcal{A}\hat{\otimes}\mathcal{A})^{\ast})^*$. Thus for every $a\in{\mathcal{A}}$
 \begin{equation*}
 wk^*\hbox{-}\lim\limits_{\alpha}wk^*\hbox{-}\lim\limits_{\beta}(a\cdot u_\beta^\alpha-u_\beta^\alpha\cdot a)=wk^*\hbox{-}\lim\limits_{\alpha}(a\cdot m_{\alpha}-m_{\alpha}\cdot a)=0\quad\hbox{ in $(\sigma{wc}(\mathcal{A}\hat{\otimes}\mathcal{A})^{\ast})^*$}.
 \end{equation*}
Since $\pi_{\sigma{wc}}$ is $wk^*$-continuous and the multiplication in $\mathcal{A}$ is separately $wk^*$-continuous \cite[Exercise 4.4.1]{Runde:2002},	
\begin{equation}
 wk^*\hbox{-}\lim\limits_{\alpha}wk^*\hbox{-}\lim\limits_{\beta}a\pi_{\sigma{wc}}(u_\beta^\alpha)=wk^*\hbox{-}\lim\limits_{\alpha}a\pi_{\sigma{wc}}(m_{\alpha})=a\quad\hbox{ in $\mathcal{A}$}.
\end{equation}
Let $E=I\times\Theta^I$ be a directed set with product ordering defined by
\begin{equation*}
(\alpha,\beta)\leq_E(\alpha^\prime,\beta^\prime)\Leftrightarrow\alpha\leq_I\alpha^\prime,\beta\leq_{\Theta^I}\beta^\prime\qquad(\alpha,\alpha^\prime\in{I},\quad \beta,\beta^\prime\in{\Theta^I}),
\end{equation*}
where $\Theta^I$ is the set of all functions from $I$ into $\Theta$ and $\beta\leq_{\Theta^I}\beta^\prime$ means that $\beta(d)\leq_\Theta\beta^\prime(d)$ for every $d\in{I}$. Suppose that $\gamma=(\alpha,\beta_\alpha)$ and $n_\gamma=u_\beta^\alpha$. By iterated limit theorem \cite[Page 69]{kelley:75}, one can see that $wk^*\hbox{-}\lim\limits_{\gamma}a\cdot n_\gamma-n_\gamma\cdot a=0$ in $(\sigma{wc}(\mathcal{A}\hat{\otimes}\mathcal{A})^{\ast})^*$ and $wk^*\hbox{-}\lim\limits_{\gamma}a\pi_{\sigma{wc}}(n_\gamma)=a$ in $\mathcal{A}$. So $\mathcal{A}$ is pseudo-Connes amenable \cite[Definition 4.3]{Mahmoodi:14}.
\end{proof}
\begin{lemma}\label{l2.3}
	Let $\mathcal{A}$ be a commutative pseudo-Connes amenable dual Banach algebra. Then $\mathcal{A}$ is strong pseudo-Connes amenable.
\end{lemma}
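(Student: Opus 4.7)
The plan is to observe that the two notions (pseudo-Connes amenability and strong pseudo-Connes amenability) differ only in two points: the ambient space containing the net, and the requirement of the exact identity $a\pi_{\sigma wc}(m_\alpha)=\pi_{\sigma wc}(m_\alpha)a$ in condition (ii). When $\mathcal{A}$ is commutative, this identity is automatic, so the witness net for pseudo-Connes amenability, after being transported into $(\sigma wc(\mathcal{A}\hat{\otimes}\mathcal{A})^{\ast})^{\ast}$, can be used verbatim to witness strong pseudo-Connes amenability.

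Concretely, I would start with the net $(m_{\alpha})\subseteq\mathcal{A}\hat{\otimes}\mathcal{A}$ guaranteed by pseudo-Connes amenability, and push it into $(\sigma wc(\mathcal{A}\hat{\otimes}\mathcal{A})^{\ast})^{\ast}$ via the $\mathcal{A}$-bimodule map $\eta$ defined in the proof of Proposition \ref{p2.2} (the composition of the canonical inclusion $\mathcal{A}\hat{\otimes}\mathcal{A}\hookrightarrow(\mathcal{A}\hat{\otimes}\mathcal{A})^{\ast\ast}$ with the restriction map $q$ of Remark \ref{R1}). Because $\eta$ is an $\mathcal{A}$-bimodule map, the relation
\begin{equation*}
a\cdot \eta(m_{\alpha})-\eta(m_{\alpha})\cdot a=\eta(a\cdot m_{\alpha}-m_{\alpha}\cdot a)\overset{wk^{\ast}}{\longrightarrow}0
\end{equation*}
in $(\sigma wc(\mathcal{A}\hat{\otimes}\mathcal{A})^{\ast})^{\ast}$ is immediate from pseudo-Connes amenability, so condition (i) of Definition \ref{d1} is satisfied.

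For condition (ii), the key (easy) verification is that $\pi_{\sigma wc}\circ\eta=\pi_{\mathcal{A}}$ on $\mathcal{A}\hat{\otimes}\mathcal{A}$; this follows by pairing both sides against an arbitrary $\psi\in\mathcal{A}_{\ast}$ and unfolding the definitions of $\eta$ and $\pi_{\sigma wc}$ (the latter is the dual of $\pi^{\ast}|_{\mathcal{A}_{\ast}}$). Consequently $\pi_{\sigma wc}(\eta(m_{\alpha}))=\pi_{\mathcal{A}}(m_{\alpha})$ actually lives in $\mathcal{A}$, and commutativity of $\mathcal{A}$ gives the free equality
\begin{equation*}
a\,\pi_{\sigma wc}(\eta(m_{\alpha}))=\pi_{\sigma wc}(\eta(m_{\alpha}))\,a\qquad(a\in\mathcal{A}).
\end{equation*}
The $wk^{\ast}$-convergence to $a$ is precisely the second part of the pseudo-Connes amenability hypothesis.

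There is no genuine obstacle; the whole point of the lemma is this commutativity remark. The only thing worth writing down carefully is the identity $\pi_{\sigma wc}\circ\eta=\pi_{\mathcal{A}}$, which makes it legitimate to transfer the approximate diagonal property from $\mathcal{A}\hat{\otimes}\mathcal{A}$ up to $(\sigma wc(\mathcal{A}\hat{\otimes}\mathcal{A})^{\ast})^{\ast}$ without disturbing condition (ii).
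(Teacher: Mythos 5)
Your argument is correct and is exactly the elaboration of what the paper dismisses with the single word ``Clear'': push the pseudo-Connes witness net into $(\sigma wc(\mathcal{A}\hat{\otimes}\mathcal{A})^{\ast})^{\ast}$ via $\eta$, note $\pi_{\sigma wc}\circ\eta=\pi_{\mathcal{A}}$, and use commutativity to get the equality in condition (ii) for free. No discrepancy with the paper's (omitted) proof.
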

\begin{proof}
Clear.	
\end{proof}
Recently a new notion of amenability for Banach algebra, Johnson pseudo-contractible and similarly a new notion of Connes amenability for dual Banach algebra, Johnson pseudo-Connes amenable introduced \cite{Sahami:2017}, \cite{Sha:17}. A dual Banach algebra $\mathcal{A}$ is called Johnson pseudo-Connes amenable, if there exists a not necessarily bounded net $(m_{\alpha})$ in $(\mathcal{A}\hat{\otimes}\mathcal{A})^{**}$ such that $\langle{T},a\cdot{m_{\alpha}}\rangle=\langle{T},{m_{\alpha}}\cdot{a}\rangle$ and $i^{\ast}_{\mathcal{A}_{\ast}}\pi_{\mathcal{A}}^{**}(m_{\alpha}){a}\rightarrow{a}$ for every $a\in{\mathcal{A}}$ and $T\in\sigma{wc}(\mathcal{A}\hat{\otimes}\mathcal{A})^{\ast}$, where $i_{\mathcal{A}_{\ast}}:\mathcal{A}_{\ast}\hookrightarrow\mathcal{A}^{\ast}$ is a canonical embedding \cite{Sha:17}. The notion of Johnson pseudo-Connes amenable is stronger than pseudo-Connes amenability \cite[Lemma 2.4]{Sha:17}. The authors showed that for a locally compact group $G$, $M(G)$ is Johnson pseudo-Connes amenable if and only if $G$ is amenable and Also for every non-empty set $I$, $\mathbb{M}_I(\mathbb{C})$ under this new notion is forced to have a finite index. For more details see \cite{Sha:17}.
\begin{Proposition}\label{p4}
Let $\mathcal{A}$ be a dual Banach algebra. If $\mathcal{A}$ is Johnson pseudo-Connes amenable, then $\mathcal{A}$ is strong pseudo-Connes amenable.
\end{Proposition}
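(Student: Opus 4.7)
The plan is to take the Johnson pseudo-Connes net $(m_\alpha)\subset(\mathcal{A}\hat{\otimes}\mathcal{A})^{**}$ that witnesses Johnson pseudo-Connes amenability and push it down to $(\sigma wc(\mathcal{A}\hat{\otimes}\mathcal{A})^{*})^{*}$ via the quotient map $q$ described in Remark \ref{R1}. Setting $M_\alpha:=q(m_\alpha)$, I expect the two defining conditions of strong pseudo-Connes amenability to fall out almost immediately from the two conditions defining Johnson pseudo-Connes amenability.

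First, I would verify condition (i) of Definition \ref{d1}. For any $T\in \sigma wc(\mathcal{A}\hat{\otimes}\mathcal{A})^{*}$, since $q$ is the $\mathcal{A}$-bimodule map given by restriction to $\sigma wc(\mathcal{A}\hat{\otimes}\mathcal{A})^{*}$, one has
\begin{equation*}
\langle T, a\cdot M_\alpha-M_\alpha\cdot a\rangle=\langle T, a\cdot m_\alpha-m_\alpha\cdot a\rangle=0
\end{equation*}
directly by the Johnson pseudo-Connes hypothesis. Note this yields the stronger statement $a\cdot M_\alpha=M_\alpha\cdot a$ (not merely $wk^{*}$-convergence to $0$), which will be useful in the next step.

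The key identity needed for condition (ii) is that $\pi_{\sigma wc}\circ q=i^{*}_{\mathcal{A}_{*}}\circ \pi^{**}_{\mathcal{A}}$ as maps $(\mathcal{A}\hat{\otimes}\mathcal{A})^{**}\to\mathcal{A}$. This I would check by pairing with an arbitrary $f\in\mathcal{A}_{*}$: unwinding the definition of $\pi_{\sigma wc}$, $\langle f,\pi_{\sigma wc}(q(m_\alpha))\rangle=\langle \pi^{*}(f),m_\alpha\rangle=\langle f,\pi^{**}_{\mathcal{A}}(m_\alpha)\rangle$, where the last expression is, by the very definition of $i^{*}_{\mathcal{A}_{*}}$, $\langle f,i^{*}_{\mathcal{A}_{*}}\pi^{**}_{\mathcal{A}}(m_\alpha)\rangle$. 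Granted this identity, condition (ii) now follows: by Johnson pseudo-Connes amenability $i^{*}_{\mathcal{A}_{*}}\pi^{**}_{\mathcal{A}}(m_\alpha)a\to a$ in $\mathcal{A}$, hence $\pi_{\sigma wc}(M_\alpha)a\to a$ and in particular converges $wk^{*}$. The equality $a\pi_{\sigma wc}(M_\alpha)=\pi_{\sigma wc}(M_\alpha)a$ is immediate from the bimodule property of $\pi_{\sigma wc}$ applied to $a\cdot M_\alpha=M_\alpha\cdot a$ established above.

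The main obstacle, and really the only nontrivial bookkeeping in the argument, is unwinding the identification $\pi_{\sigma wc}(q(\cdot))=i^{*}_{\mathcal{A}_{*}}\pi^{**}_{\mathcal{A}}(\cdot)$; once this is in hand both clauses of the definition of strong pseudo-Connes amenability transfer mechanically. No diagonal/iterated-limit argument is required here (unlike in Proposition \ref{p2.2}), because the Johnson-type conditions already hold for the net $(m_\alpha)$ itself rather than being attained through a $wk^{*}$-density approximation.
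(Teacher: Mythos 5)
Your proposal is correct and follows essentially the same route as the paper: push $(m_\alpha)$ down via the quotient map $q$ of Remark \ref{R1}, observe that the Johnson condition gives the exact equality $a\cdot q(m_\alpha)=q(m_\alpha)\cdot a$, and use the identity $\pi_{\sigma wc}\circ q=i^{*}_{\mathcal{A}_{*}}\circ\pi^{**}_{\mathcal{A}}$ (verified by pairing against $f\in\mathcal{A}_{*}$, exactly as in the paper's equation (\ref{ee2})) together with the bimodule property of $\pi_{\sigma wc}$ to obtain condition (ii). No gaps.
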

\begin{proof}
Since $\mathcal{A}$ is Johnson pseudo-Connes amenable, there exists a net $(m_{\alpha})$ in $(\mathcal{A}\hat{\otimes}\mathcal{A})^{**}$ such that $\langle{T},a\cdot{m_{\alpha}}\rangle=\langle{T},{m_{\alpha}}\cdot{a}\rangle$ and $i^{\ast}_{\mathcal{A}_{\ast}}\pi_{\mathcal{A}}^{**}(m_{\alpha}){a}\rightarrow{a}$ for every $a\in{\mathcal{A}}$ and $T\in\sigma{wc}(\mathcal{A}\hat{\otimes}\mathcal{A})^{\ast}$. Let $\tilde{m}_\alpha=q(m_{\alpha})$, where $q:(\mathcal{A}\hat{\otimes}\mathcal{A})^{\ast\ast}\rightarrow(\sigma{wc}({\mathcal{A}}\hat{\otimes}{\mathcal{A}})^{\ast})^{\ast}$ is a quotient map as in Remark \ref{R1}. So $a\cdot\tilde{m}_\alpha=\tilde{m}_\alpha\cdot a$ for every $a\in{\mathcal{A}}$. Since $\pi_{\sigma{wc}}$ is an $\mathcal{A}$-bimodule homomorphism, $a\pi_{\sigma{wc}}(\tilde{m}_\alpha)=\pi_{\sigma{wc}}(\tilde{m}_\alpha)a$. For every $f\in{\mathcal{A}_{\ast}}$ we have
\begin{equation*}
\begin{split}
\langle f,\pi_{\sigma{wc}}q({m}_\alpha)\rangle&=\langle \pi^*\vert_{\mathcal{A}_{\ast}}(f),q({m}_\alpha)\rangle=\langle \pi^*\vert_{\mathcal{A}_{\ast}}(f),{m}_\alpha\rangle=\langle\pi^*(f),{m}_\alpha\rangle\\&=\langle f,\pi^{**}({m}_\alpha)\rangle=\langle i_{\mathcal{A}_{\ast}}(f),\pi^{**}({m}_\alpha)\rangle=\langle f,i^{\ast}_{\mathcal{A}_{\ast}}\pi^{**}({m}_\alpha)\rangle.
\end{split}
\end{equation*}
So 
\begin{equation}\label{ee2}
\pi_{\sigma{wc}}q=i^{\ast}_{\mathcal{A}_{\ast}}\pi
_{\mathcal{A}}^{**}.
\end{equation}
Thus $wk^*\hbox{-}\lim\limits_{\alpha}a\pi_{\sigma{wc}}(\tilde{m}_{\alpha})=wk^*\hbox{-}\lim\limits_{\alpha}\pi_{\sigma{wc}}(\tilde{m}_{\alpha})a=a$. Hence $\mathcal{A}$ is strong pseudo-Connes amenable.
\end{proof}
\begin{Proposition}\label{P2.5}
	Let $\mathcal{A}$ be a dual Banach algebra. If $\mathcal{A}$ is Connes amenable, then $\mathcal{A}$ is strong pseudo-Connes amenable.
\end{Proposition}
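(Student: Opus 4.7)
The plan is to use the $\sigma wc$-virtual diagonal guaranteed by Connes amenability as a constant net in the definition of strong pseudo-Connes amenability. By hypothesis, there exists $M\in(\sigma wc(\mathcal{A}\hat{\otimes}\mathcal{A})^{\ast})^{\ast}$ with $a\cdot M = M\cdot a$ and $a\pi_{\sigma wc}(M)=a$ for every $a\in\mathcal{A}$. I would set $m_{\alpha}:=M$ for every index $\alpha$ in any directed set.

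Verifying condition (i) of Definition \ref{d1} is immediate: $a\cdot m_{\alpha}-m_{\alpha}\cdot a=a\cdot M-M\cdot a=0$, which trivially $wk^{\ast}$-converges to $0$ in $(\sigma wc(\mathcal{A}\hat{\otimes}\mathcal{A})^{\ast})^{\ast}$. For condition (ii), I would exploit the fact that $\pi_{\sigma wc}$ is an $\mathcal{A}$-bimodule homomorphism: from $a\cdot M=M\cdot a$, applying $\pi_{\sigma wc}$ yields $a\pi_{\sigma wc}(M)=\pi_{\sigma wc}(a\cdot M)=\pi_{\sigma wc}(M\cdot a)=\pi_{\sigma wc}(M)a$, so the left-right commutativity is automatic. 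The hypothesis $a\pi_{\sigma wc}(M)=a$ then gives the required equality (and hence trivial $wk^{\ast}$-convergence) to $a$.

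I do not anticipate any real obstacle here: the argument is essentially a tautology once one observes that a $\sigma wc$-virtual diagonal furnishes a stronger pointwise identity than the net-convergence demanded by Definition \ref{d1}, and that the bimodule property of $\pi_{\sigma wc}$ converts the centrality of $M$ into centrality of $\pi_{\sigma wc}(M)$ inside $\mathcal{A}$. The only thing to be careful about is that the definition requires both $a\pi_{\sigma wc}(m_{\alpha})$ and $\pi_{\sigma wc}(m_{\alpha})a$ to converge $wk^{\ast}$ to $a$, not just one of them; but as just noted the bimodule homomorphism property forces these two quantities to be equal, so the single hypothesis $a\pi_{\sigma wc}(M)=a$ suffices for both.
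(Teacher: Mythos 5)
Your proof is correct and follows essentially the same route as the paper: both take the $\sigma wc$-virtual diagonal $M$ from Runde's characterization of Connes amenability as a constant net, and both use the $\mathcal{A}$-bimodule homomorphism property of $\pi_{\sigma wc}$ to convert $a\cdot M=M\cdot a$ into $a\pi_{\sigma wc}(M)=\pi_{\sigma wc}(M)a$, after which the required $wk^{\ast}$-convergences are trivial.
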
	
\begin{proof}
	Let $\mathcal{A}$ be a Connes amenable Banach algebra. Then by \cite[Theorem 4.8]{Runde:2004}, there is an element ${M}\in{(\sigma{wc}(\mathcal{A}\hat{\otimes}\mathcal{A})^{\ast})^*}$ such that
	\begin{center}
		$a\cdot{M}={M}\cdot{a}\quad$ and $\quad\pi_{\sigma{wc}}({M})a=a\quad(a\in{\mathcal{A}})$. 
	\end{center}
Since $\pi_{\sigma{wc}}$ is an $\mathcal{A}$-bimodule homomorphism, $a\pi_{\sigma{wc}}({M})=\pi_{\sigma{wc}}({M})a$. So $\mathcal{A}$ is strong pseudo-Connes amenable.
\end{proof}
\begin{Remark}
	Let $\mathcal{A}$ be a strong pseudo-Connes amenable dual Banach algebra. In the Definition \ref{d1} if $(m_{\alpha})$  is considered as a bounded net, then by Banach-Alaoglu theorem, there is a $wk^*$-limit point for the net $(m_{\alpha})$. Let $M=wk^*\hbox{-}\lim\limits_{\alpha}m_{\alpha}$. One can see that $M$ is a $\sigma{wc}$-virtual diagonal for $\mathcal{A}$. So $\mathcal{A}$ is Connes amenable \cite[Theorem 4.8]{Runde:2004}. 
\end{Remark}
\begin{Proposition}
	Let $\mathcal{A}$ be a dual Banach algebra. If $\mathcal{A}$ is strong pseudo-amenable, then $\mathcal{A}$ is strong pseudo Connes-amenable.
\end{Proposition}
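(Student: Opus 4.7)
The plan is to push the strong pseudo-amenable net in $(\mathcal{A}\hat{\otimes}\mathcal{A})^{**}$ down to the quotient $(\sigma wc(\mathcal{A}\hat{\otimes}\mathcal{A})^*)^*$ via the map $q$ introduced in Remark \ref{R1}, and then verify both conditions of Definition \ref{d1} by routine diagram chasing, reusing the key identity $\pi_{\sigma wc}\circ q = i^{*}_{\mathcal{A}_*}\circ \pi^{**}_{\mathcal{A}}$ that was established as (\ref{ee2}) inside Proposition \ref{p4}.

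Concretely, let $(m_\alpha)\subseteq (\mathcal{A}\hat{\otimes}\mathcal{A})^{**}$ be the net witnessing strong pseudo-amenability, so that for every $a\in\mathcal{A}$ one has $a\cdot m_\alpha-m_\alpha\cdot a\to 0$ and $a\pi^{**}_{\mathcal{A}}(m_\alpha)=\pi^{**}_{\mathcal{A}}(m_\alpha)a\to a$. Define $\tilde m_\alpha := q(m_\alpha)\in (\sigma wc(\mathcal{A}\hat{\otimes}\mathcal{A})^{*})^{*}$. The map $q$, being restriction to an $\mathcal{A}$-submodule, is a contractive $\mathcal{A}$-bimodule homomorphism; hence $a\cdot\tilde m_\alpha-\tilde m_\alpha\cdot a = q(a\cdot m_\alpha - m_\alpha\cdot a)\to 0$ in norm, which yields condition (i) of Definition \ref{d1} in the much weaker $wk^{*}$ topology.

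For condition (ii), I would first note that $i^{*}_{\mathcal{A}_*}\colon\mathcal{A}^{**}\to\mathcal{A}$ is a contractive $\mathcal{A}$-bimodule map that restricts to the identity on $\mathcal{A}\subseteq\mathcal{A}^{**}$; this is a short unfolding of the definitions using that $\mathcal{A}_{*}$ is a submodule of $\mathcal{A}^{*}$. Applying identity (\ref{ee2}) we get
\begin{equation*}
a\pi_{\sigma wc}(\tilde m_\alpha)=a\,i^{*}_{\mathcal{A}_*}\pi^{**}_{\mathcal{A}}(m_\alpha)=i^{*}_{\mathcal{A}_*}\bigl(a\pi^{**}_{\mathcal{A}}(m_\alpha)\bigr)\to i^{*}_{\mathcal{A}_*}(a)=a,
\end{equation*}
with norm convergence in $\mathcal{A}$, and symmetrically on the right. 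The centrality $a\pi_{\sigma wc}(\tilde m_\alpha)=\pi_{\sigma wc}(\tilde m_\alpha)a$ is inherited from the corresponding equality for $\pi^{**}_{\mathcal{A}}(m_\alpha)$ via the same identity (\ref{ee2}). Norm convergence in $\mathcal{A}$ clearly implies $wk^{*}$ convergence, so (ii) of Definition \ref{d1} holds and $\mathcal{A}$ is strong pseudo-Connes amenable.

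The only point requiring care, and the one I would double-check, is that $q$ and $i^{*}_{\mathcal{A}_*}$ are genuine $\mathcal{A}$-bimodule maps so that they commute with the left and right $a$-actions; everything else is a direct transcription of the strong pseudo-amenability data through the two canonical maps, so no real obstacle arises.
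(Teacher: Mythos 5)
Your argument is correct and takes essentially the same route as the paper's own proof: both push the net down through the quotient map $q$ of Remark \ref{R1} and then verify condition (ii) via the identity $\pi_{\sigma wc}\circ q=i^{*}_{\mathcal{A}_*}\circ\pi^{**}_{\mathcal{A}}$ established as (\ref{ee2}). You merely make explicit the bimodule properties of $q$ and $i^{*}_{\mathcal{A}_*}$ that the paper leaves implicit.
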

\begin{proof}
Since $\mathcal{A}$ is strong pseudo-amenable, there exists a net $(m_{\alpha})$ in $(\mathcal{A}\hat{\otimes}\mathcal{A})^{\ast\ast}$	such that for every $a\in{\mathcal{A}}$
\begin{equation}\label{ee3}
 a\cdot m_{\alpha}-m_{\alpha}\cdot a\rightarrow0,\quad 
 a\pi^{**}_{\mathcal{A}}(m_{\alpha})=\pi^{**}_{\mathcal{A}}(m_{\alpha})a\rightarrow a.
\end{equation}
 Let $\tilde{m}_\alpha=q(m_{\alpha})$, where $q:(\mathcal{A}\hat{\otimes}\mathcal{A})^{\ast\ast}\rightarrow(\sigma{wc}({\mathcal{A}}\hat{\otimes}{\mathcal{A}})^{\ast})^{\ast}$ is a quotient map as in Remark \ref{R1}. So $a\cdot\tilde{m}_\alpha-\tilde{m}_\alpha\cdot a\overset{wk^*}{\rightarrow}0$ for every $a\in{\mathcal{A}}$. By (\ref{ee2}) and (\ref{ee3}) we have
 \begin{equation*}
wk^*\hbox{-}\lim\limits_{\alpha}a\pi_{\sigma{wc}}(\tilde{m}_{\alpha})=wk^*\hbox{-}\lim\limits_{\alpha}\pi_{\sigma{wc}}(\tilde{m}_{\alpha})a=a.
 \end{equation*}
\end{proof}
 A dual Banach algebra $\mathcal{A}$ is called Connes biprojective if there exists a bounded $\mathcal{A}$-bimodule morphism $\rho:\mathcal{A}\longrightarrow(\sigma{wc}(\mathcal{A}\hat{\otimes}\mathcal{A})^{\ast})^{\ast}$ such that $\pi_{\sigma{wc}}\circ\rho=id_{\mathcal{A}}$ . Shirinkalam and second auther \cite{Shi:2016} showed that a dual Banach algebra $\mathcal{A}$ is Connes amenable if and only if $\mathcal{A}$ is Connes biprojective and has an identity.
\begin{Proposition}
	Let $\mathcal{A}$ be a dual Banach algebra with a central approximate identity. If $\mathcal{A}$ is Connes biprojective, then $\mathcal{A}$ is strong pseudo-Connes amenable.
\end{Proposition}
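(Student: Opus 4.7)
The plan is to take the candidate net
\[ m_\alpha := \rho(e_\alpha), \]
where $(e_\alpha)\subseteq\mathcal{A}$ is the given central approximate identity and $\rho:\mathcal{A}\to(\sigma wc(\mathcal{A}\hat\otimes\mathcal{A})^{\ast})^{\ast}$ is the bounded $\mathcal{A}$-bimodule morphism provided by Connes biprojectivity, and verify the two conditions of Definition~\ref{d1} directly. Since $\rho$ takes values in $(\sigma wc(\mathcal{A}\hat\otimes\mathcal{A})^{\ast})^{\ast}$, the net $(m_\alpha)$ lives in the correct space, so no approximation by Goldstine or iterated-limit argument is needed.

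For condition (i), I would use that $\rho$ is an $\mathcal{A}$-bimodule homomorphism to write
\[ a\cdot m_\alpha-m_\alpha\cdot a=\rho(ae_\alpha)-\rho(e_\alpha a)=\rho(ae_\alpha-e_\alpha a)=0, \]
the last equality holding because $(e_\alpha)$ is central. This gives the required convergence to $0$ on the nose (in particular in the $wk^{\ast}$-topology).

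For condition (ii), applying $\pi_{\sigma wc}$ and using $\pi_{\sigma wc}\circ\rho=\mathrm{id}_{\mathcal{A}}$ yields
\[ \pi_{\sigma wc}(m_\alpha)=\pi_{\sigma wc}(\rho(e_\alpha))=e_\alpha, \]
and then centrality plus the approximate-identity property give
\[ a\pi_{\sigma wc}(m_\alpha)=ae_\alpha=e_\alpha a=\pi_{\sigma wc}(m_\alpha)a\longrightarrow a \]
in norm, hence in the $wk^{\ast}$-topology on $\mathcal{A}$.

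There is essentially no substantive obstacle here: both conditions collapse to trivial identities once $m_\alpha=\rho(e_\alpha)$ is chosen, because the bimodule-homomorphism property of $\rho$ converts the commutator in $(\sigma wc(\mathcal{A}\hat\otimes\mathcal{A})^{\ast})^{\ast}$ into the commutator $ae_\alpha-e_\alpha a$ in $\mathcal{A}$, which vanishes. The only point worth stating carefully in the write-up is that Connes biprojectivity guarantees $\rho$ is bounded and $\mathcal{A}$-bilinear with $\pi_{\sigma wc}\circ\rho=\mathrm{id}_{\mathcal{A}}$, which are precisely the two ingredients used.
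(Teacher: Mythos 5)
Your proof is correct, but it follows a genuinely different route from the paper's. The paper proves this proposition in two steps by citation: it first invokes an external result (Proposition 2.5 of the authors' preprint on Johnson pseudo-Connes amenability) to conclude that a Connes biprojective dual Banach algebra with a central approximate identity is Johnson pseudo-Connes amenable, and then applies the implication Johnson pseudo-Connes amenable $\Rightarrow$ strong pseudo-Connes amenable established earlier in the paper. That route requires producing a net in the larger space $(\mathcal{A}\hat{\otimes}\mathcal{A})^{**}$ and pushing it down through the quotient map $q$. Your argument instead works entirely inside $(\sigma wc(\mathcal{A}\hat{\otimes}\mathcal{A})^{*})^{*}$: setting $m_\alpha=\rho(e_\alpha)$, the bimodule property of $\rho$ kills the commutator exactly (since $ae_\alpha=e_\alpha a$), and $\pi_{\sigma wc}\circ\rho=\mathrm{id}_{\mathcal{A}}$ reduces condition (ii) to the approximate-identity property, with norm convergence trivially implying $wk^{*}$-convergence. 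This is self-contained, avoids the dependence on the unpublished preprint, and in fact shows slightly more than is needed (the commutator condition holds exactly, not just in the limit); what the paper's approach buys is the intermediate and independently interesting fact that such algebras are Johnson pseudo-Connes amenable, a strictly stronger property. Your write-up is complete as it stands.
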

\begin{proof}
Let $\mathcal{A}$ be a Connes biprojective dual Banach algebra with a central approximate identity. Then $\mathcal{A}$ is Johnson pseudo-Connes amenable \cite[Proposition 2.5]{Sha:17}. So Proposition \ref{p4} implies that $\mathcal{A}$ is strong pseudo-Connes amenable. 
	\end{proof}
\begin{Proposition}\label{p9}
	Let $\mathcal{A}$ and $\mathcal{B}$ be  dual Banach algebras. Suppose that $\theta:\mathcal{A}\longrightarrow\mathcal{B}$ is a continuous epimorphism which is also $wk^*$-continuous. If $\mathcal{A}$ is strong pseudo-Connes amenable, then $\mathcal{B}$ is strong pseudo-Connes amenable.
\end{Proposition}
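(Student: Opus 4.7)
The plan is to transport the approximating net across $\theta$ by assembling a $wk^*$-$wk^*$-continuous linear map at the level of the $\sigma wc$-duals. First I would consider $\phi:=\theta\otimes\theta:\mathcal{A}\hat{\otimes}\mathcal{A}\to\mathcal{B}\hat{\otimes}\mathcal{B}$ obtained from the projective tensor product, which is a bounded algebra homomorphism satisfying the bimodule identities $\phi(a\cdot u)=\theta(a)\cdot\phi(u)$, $\phi(u\cdot a)=\phi(u)\cdot\theta(a)$, together with the key relation $\pi_{\mathcal{B}}\circ\phi=\theta\circ\pi_{\mathcal{A}}$.

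Second, I would verify that the adjoint $\phi^*:(\mathcal{B}\hat{\otimes}\mathcal{B})^*\to(\mathcal{A}\hat{\otimes}\mathcal{A})^*$ maps $\sigma wc((\mathcal{B}\hat{\otimes}\mathcal{B})^*)$ into $\sigma wc((\mathcal{A}\hat{\otimes}\mathcal{A})^*)$. For $g$ in the former, dualizing the bimodule relations yields $a\cdot\phi^*(g)=\phi^*(\theta(a)\cdot g)$; combining this with the $wk^*$-continuity of $\theta$, the $wk^*$-$wk$-continuity of $b\mapsto b\cdot g$, and the weak-to-weak continuity of the bounded operator $\phi^*$, one obtains that $a\mapsto a\cdot\phi^*(g)$ is $wk^*$-$wk$-continuous, and symmetrically for the right action. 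Consequently $\Theta:=\bigl(\phi^*|_{\sigma wc((\mathcal{B}\hat{\otimes}\mathcal{B})^*)}\bigr)^*$ is a well-defined $wk^*$-$wk^*$-continuous bounded linear map from $(\sigma wc((\mathcal{A}\hat{\otimes}\mathcal{A})^*))^*$ to $(\sigma wc((\mathcal{B}\hat{\otimes}\mathcal{B})^*))^*$.

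Third, by dualizing the basic identities I would record two properties of $\Theta$: the $\theta$-equivariance $\Theta(a\cdot m)=\theta(a)\cdot\Theta(m)$ (and its right-sided counterpart), and the intertwining identity $\theta\circ\pi_{\sigma wc}^{\mathcal{A}}=\pi_{\sigma wc}^{\mathcal{B}}\circ\Theta$, the latter coming from $\pi_{\mathcal{B}}\circ\phi=\theta\circ\pi_{\mathcal{A}}$ after passage to preduals via the predual $\theta_*:\mathcal{B}_*\to\mathcal{A}_*$ of $\theta$.

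Finally, I would set $n_\alpha:=\Theta(m_\alpha)$ and, using surjectivity of $\theta$ to pick, for each $b\in\mathcal{B}$, an $a\in\mathcal{A}$ with $\theta(a)=b$, reduce both conditions of strong pseudo-Connes amenability for $\mathcal{B}$ to the corresponding ones for $\mathcal{A}$: one has $b\cdot n_\alpha-n_\alpha\cdot b=\Theta(a\cdot m_\alpha-m_\alpha\cdot a)\overset{wk^*}{\to}0$ by $wk^*$-$wk^*$-continuity of $\Theta$, and $b\,\pi_{\sigma wc}^{\mathcal{B}}(n_\alpha)=\pi_{\sigma wc}^{\mathcal{B}}(n_\alpha)\,b=\theta(a\,\pi_{\sigma wc}^{\mathcal{A}}(m_\alpha))\overset{wk^*}{\to}\theta(a)=b$ by the multiplicativity and $wk^*$-continuity of $\theta$. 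The main obstacle is the second step: ensuring that $\phi^*$ genuinely preserves the $\sigma wc$-part, which is the only place where the $wk^*$-continuity hypothesis on $\theta$ is essential.
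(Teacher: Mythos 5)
Your proposal is correct and follows essentially the same route as the paper: both construct $\theta\otimes\theta$, check that its adjoint carries $\sigma wc((\mathcal{B}\hat{\otimes}\mathcal{B})^*)$ into $\sigma wc((\mathcal{A}\hat{\otimes}\mathcal{A})^*)$ (the paper cites Mahmoodi's Lemma 4.4 for the step you verify directly), dualize the restriction to get the $wk^*$-$wk^*$-continuous map $\varPsi$, and push the net forward using the equivariance, the intertwining with $\pi_{\sigma wc}$, and the surjectivity of $\theta$. The only cosmetic difference is that you obtain the intertwining identity via the predual map $\theta_*$ while the paper uses a $wk^*$-density argument; both are valid.
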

\begin{proof}
Since $\mathcal{A}$ is strong pseudo-Connes amenable, there exists a net $(m_{\alpha})$ in $(\sigma{wc}(\mathcal{A}\hat{\otimes}\mathcal{A})^{\ast})^{\ast}$	such that for every $a\in{\mathcal{A}}$
\begin{equation}\label{eqq}
a\cdot m_{\alpha}-m_{\alpha}\cdot a\overset{wk^*}{\longrightarrow}0,\quad
a\pi^{\mathcal{A}}_{\sigma{wc}}(m_{\alpha})=\pi^{\mathcal{A}}_{\sigma{wc}}(m_{\alpha})a\overset{wk^*}{\longrightarrow}a.
\end{equation}
Define $\theta\otimes\theta:\mathcal{A}\hat{\otimes}\mathcal{A}\rightarrow\mathcal{B}\hat{\otimes}\mathcal{B}$ by $\theta\otimes\theta(x\otimes y)=\theta(x)\otimes\theta(y)$, for every $x,y\in{\mathcal{A}}$. So $\theta\otimes\theta$ is a bounded linear map. For every $a\in{\mathcal{A}}$ and $u\in{\mathcal{A}\hat{\otimes}\mathcal{A}}$ we have
\begin{equation*}
\theta(a)\cdot(\theta\otimes\theta)(u)=(\theta\otimes\theta)(a\cdot u),\qquad(\theta\otimes\theta)(u)\cdot\theta(a)=(\theta\otimes\theta)(u\cdot a).
\end{equation*}
By \cite[Lemma 4.4]{Mahmoodi:14}, for every $a\in{\mathcal{A}}$ and $f\in{(\mathcal{B}\hat{\otimes}\mathcal{B})^*}$ we have
\begin{equation}\label{eq4}
a\cdot(\theta\otimes\theta)^{*}(f)=(\theta\otimes\theta)^{*}(\theta(a)\cdot f),\qquad (\theta\otimes\theta)^{*}(f)\cdot a=(\theta\otimes\theta)^{*}(f\cdot\theta(a)),
\end{equation}
and also
\begin{equation}
(\theta\otimes\theta)^{*}(\sigma wc(\mathcal{B}\hat{\otimes}\mathcal{B})^{*})\subseteq\sigma wc(\mathcal{A}\hat{\otimes}\mathcal{A})^{*}.
\end{equation}
Define the map
\begin{equation*}
\varPsi:=((\theta\otimes\theta)^{*}\vert_{\sigma wc(\mathcal{B}\hat{\otimes}\mathcal{B})^{*}})^*:(\sigma wc(\mathcal{A}\hat{\otimes}\mathcal{A})^{*})^*\rightarrow(\sigma wc(\mathcal{B}\hat{\otimes}\mathcal{B})^{*})^*.
\end{equation*}
Let $n_\alpha=\varPsi(m_\alpha)$. (\ref{eq4}) implies that for every $a\in{\mathcal{A}}$ and $T\in{\sigma wc(\mathcal{B}\hat{\otimes}\mathcal{B})^{*}}$
\begin{equation*}
\begin{split}
\langle T, \varPsi(a\cdot m_\alpha) \rangle&=\langle(\theta\otimes\theta)^{*}(T),a\cdot m_\alpha\rangle=\langle(\theta\otimes\theta)^{*}(T)\cdot a,m_\alpha\rangle=\langle(\theta\otimes\theta)^{*}(T\cdot\theta(a)),m_\alpha\rangle\\&=\langle(\theta\otimes\theta)^{*}\vert_{\sigma wc(\mathcal{B}\hat{\otimes}\mathcal{B})^{*}}(T\cdot\theta(a)),m_\alpha\rangle=\langle T\cdot\theta(a),\varPsi( m_\alpha)\rangle=\langle T,\theta(a)\cdot \varPsi(m_\alpha)\rangle,
\end{split}
\end{equation*}
and similarity for the right action for every $a\in{\mathcal{A}}$, $\varPsi(m_\alpha\cdot a)=\varPsi(m_\alpha)\cdot\theta(a)$. Since $\varPsi$ is $wk^*$-continuous,
\begin{equation*}
\lim\limits_{\alpha}\langle T, \theta(a)\cdot n_\alpha-n_\alpha\cdot\theta(a) \rangle=\lim\limits_{\alpha}\langle T,\varPsi(a\cdot m_\alpha-m_\alpha\cdot a)\rangle=0\quad(T\in{\sigma wc(\mathcal{B}\hat{\otimes}\mathcal{B})^{*}}).
\end{equation*}
So $\theta(a)\cdot n_\alpha-n_\alpha\cdot\theta(a)\overset{wk^*}{\rightarrow}0$.
By using the argument of \cite[Proposition 4.5 (ii)]{Mahmoodi:14} for every $u\in{\mathcal{A}\hat{\otimes}\mathcal{A}}$ we have $\pi^{\mathcal{B}}_{\sigma{wc}}\circ\theta\otimes\theta(u)=\theta\circ\pi^{\mathcal{A}}_{\sigma{wc}}$. Since the map $i:\mathcal{A}\hat{\otimes}\mathcal{A}\rightarrow(\sigma wc(\mathcal{A}\hat{\otimes}\mathcal{A})^*)^*$ has a $wk^*$-dense range, for every $\alpha$ there exists $(u_\beta^\alpha)$ in ${\mathcal{A}\hat{\otimes}\mathcal{A}}$ such that $wk^*\hbox{-}\lim\limits_{\alpha} u_\beta^\alpha=m_\alpha$ in $(\sigma wc(\mathcal{A}\hat{\otimes}\mathcal{A})^*)^*$. Since the maps $\pi^{\mathcal{B}}_{\sigma{wc}}$, $\varPsi$, $\theta$ and $\pi^{\mathcal{A}}_{\sigma{wc}}$ are $wk^*$-continuous,
\begin{equation*}
\begin{split}
\pi^{\mathcal{B}}_{\sigma{wc}}\circ\varPsi(m_\alpha)&=wk^*\hbox{-}\lim\limits_{\alpha}\pi^{\mathcal{B}}_{\sigma{wc}}\circ\varPsi(u_\beta^\alpha)=wk^*\hbox{-}\lim\limits_{\alpha}\pi^{\mathcal{B}}_{\sigma{wc}}\circ\theta\otimes\theta(u_\beta^\alpha)\\&=wk^*\hbox{-}\lim\limits_{\alpha}\theta\circ\pi^{\mathcal{A}}_{\sigma{wc}}(u_\beta^\alpha)=\theta\circ\pi^{\mathcal{A}}_{\sigma{wc}}(m_\alpha).
\end{split}
\end{equation*}
  Since $\theta$ is $wk^*$-continuous, (\ref{eqq}) implies that
\begin{equation*}
 \theta(a)\pi^{\mathcal{B}}_{\sigma{wc}}(\varPsi(m_{\alpha}))=\pi^{\mathcal{B}}_{\sigma{wc}}(\varPsi(m_{\alpha}))\theta(a)\overset{wk^*}{\longrightarrow}\theta(a).
\end{equation*}
So $\mathcal{B}$ is strong pseudo-Connes amenable.
\end{proof}
\begin{cor}
	Let $\mathcal{A}$ be a dual Banach algebra and let $I$ be a $wk^*$-closed ideal of $\mathcal{A}$. If $\mathcal{A}$ is strong pseudo-Connes amenable, then ${\mathcal{A}}/{I}$ is strong pseudo-Connes amenable.
\end{cor}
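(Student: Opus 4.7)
The plan is to derive this as a direct application of Proposition \ref{p9}. The quotient map $\theta:\mathcal{A}\longrightarrow\mathcal{A}/I$ is always a continuous algebra epimorphism, so the two things I need to verify are that $\mathcal{A}/I$ carries a dual Banach algebra structure and that $\theta$ is $wk^*$-continuous with respect to this structure; once both are in place, Proposition \ref{p9} yields strong pseudo-Connes amenability of $\mathcal{A}/I$ immediately.

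First I would produce the predual of $\mathcal{A}/I$. Write $\mathcal{A}=(\mathcal{A}_{\ast})^{\ast}$ and consider the preannihilator
\begin{equation*}
{}^{\perp}I=\set{f\in\mathcal{A}_{\ast}:\langle f,x\rangle=0\text{ for all }x\in I}.
\end{equation*}
Because $I$ is $wk^*$-closed, the bipolar theorem gives $({}^{\perp}I)^{\perp}=I$, so the canonical isometric isomorphism
\begin{equation*}
(\mathcal{A}_{\ast}/{}^{\perp}I)^{\ast}\cong I^{\perp}\subseteq\mathcal{A}^{\ast}\qquad\text{and}\qquad ({}^{\perp}I)^{\ast}\cong\mathcal{A}/I
\end{equation*}
identifies $\mathcal{A}/I$ with the dual of the Banach space ${}^{\perp}I$. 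I would then check that ${}^{\perp}I$ is a closed $\mathcal{A}/I$-submodule of $(\mathcal{A}/I)^{\ast}$: for $f\in{}^{\perp}I$, $a\in\mathcal{A}$ and $x\in I$ one has $\langle a\cdot f,x\rangle=\langle f,xa\rangle=0$ and symmetrically on the other side, since $I$ is a (two-sided) ideal. Hence $\mathcal{A}/I$ is a dual Banach algebra with predual ${}^{\perp}I$.

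Next I would show $wk^*$-continuity of $\theta$. By definition of the predual of $\mathcal{A}/I$ via ${}^{\perp}I\hookrightarrow\mathcal{A}_{\ast}$, the adjoint of $\theta$ is the inclusion $\theta^{\ast}:{}^{\perp}I\hookrightarrow\mathcal{A}_{\ast}$, which is a bounded linear map between the preduals. Equivalently, for a net $a_{\lambda}\overset{wk^*}{\to}a$ in $\mathcal{A}$ and any $f\in{}^{\perp}I\subseteq\mathcal{A}_{\ast}$,
\begin{equation*}
\langle f,\theta(a_{\lambda})\rangle=\langle f,a_{\lambda}\rangle\longrightarrow\langle f,a\rangle=\langle f,\theta(a)\rangle,
\end{equation*}
showing that $\theta$ is $wk^*$-$wk^*$-continuous.

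Since $\theta$ is surjective (it is a quotient map) and the hypotheses of Proposition \ref{p9} are now met, the conclusion that $\mathcal{A}/I$ is strong pseudo-Connes amenable follows. I do not expect any serious obstacle here; the only mildly technical step is the verification that ${}^{\perp}I$ serves as a predual for the Banach algebra $\mathcal{A}/I$, which is the standard reason one requires $I$ to be $wk^{\ast}$-closed in this sort of statement.
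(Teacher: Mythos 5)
Your proposal is correct and follows exactly the paper's route: the paper's proof is the one-line observation that the quotient map is a $wk^*$-continuous epimorphism, so Proposition \ref{p9} applies, and you have merely filled in the standard details (the predual ${}^{\perp}I$ of $\mathcal{A}/I$ and the $wk^*$-continuity of the quotient map) that the paper leaves implicit. One tiny slip: the unused side identification should read $(\mathcal{A}_{\ast}/{}^{\perp}I)^{\ast}\cong({}^{\perp}I)^{\perp}=I$ rather than $I^{\perp}$, but this does not affect the argument since the relevant isomorphism $({}^{\perp}I)^{\ast}\cong\mathcal{A}/I$ is stated correctly.
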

\begin{proof}
	Since the quotient map $q:\mathcal{A}\rightarrow{\mathcal{A}}/{I}$ is a $wk^*$-continuous map, by Proposition \ref{p9} the dual Banach algebra ${\mathcal{A}}/{I}$ is strong pseudo-Connes amenable. 
\end{proof}

\begin{lemma}\label{l2.12}
	Let $\mathcal{A}$ be a dual Banach algebra and $\varphi\in{\Delta_{wk^*}(\mathcal{A})}$. If $\mathcal{A}$ is strong pseudo-Connes amenable, then there is a net $(n_\alpha)$ in $\mathcal{A}$ such that 
	\begin{equation*}
	a n_\alpha-\varphi(a)n_\alpha\overset{wk^*}{\rightarrow}0,\qquad\varphi(n_\alpha)\rightarrow1\quad(a\in{\mathcal{A}}).
	\end{equation*}
\end{lemma}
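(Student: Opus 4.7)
My plan is to transfer the pseudo-diagonal net coming from strong pseudo-Connes amenability into a net in $\mathcal{A}$ via the map $\tau\colon\mathcal{A}\hat{\otimes}\mathcal{A}\to\mathcal{A}$ given by $\tau(x\otimes y)=\varphi(y)x$. A direct check using multiplicativity of $\varphi$ yields $\tau(a\cdot u)=a\tau(u)$ and $\tau(u\cdot a)=\varphi(a)\tau(u)$ for every $u\in\mathcal{A}\hat{\otimes}\mathcal{A}$ and $a\in\mathcal{A}$, together with $\varphi\circ\tau=\varphi\otimes\varphi=\varphi\circ\pi$ on $\mathcal{A}\hat{\otimes}\mathcal{A}$.

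First I would invoke Proposition \ref{p2.2} so that $\mathcal{A}$ is pseudo-Connes amenable, obtaining a net $(u_\gamma)$ in $\mathcal{A}\hat{\otimes}\mathcal{A}$ with $a\cdot u_\gamma-u_\gamma\cdot a\overset{wk^*}{\longrightarrow}0$ in $(\sigma wc(\mathcal{A}\hat{\otimes}\mathcal{A})^{*})^{*}$ and $a\pi(u_\gamma)\overset{wk^*}{\longrightarrow} a$ in $\mathcal{A}$, noting that $\pi_{\sigma wc}$ agrees with $\pi$ on elements of $\mathcal{A}\hat{\otimes}\mathcal{A}$. Setting $n_\gamma:=\tau(u_\gamma)\in\mathcal{A}$, the identities above give $an_\gamma-\varphi(a)n_\gamma=\tau(a\cdot u_\gamma-u_\gamma\cdot a)$ and $\varphi(n_\gamma)=\varphi(\pi(u_\gamma))$. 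To get $\varphi(n_\gamma)\to 1$, I would pick $a_0\in\mathcal{A}$ with $\varphi(a_0)=1$ and apply $\varphi\in\mathcal{A}_*$ to $a_0\pi(u_\gamma)\overset{wk^*}{\longrightarrow} a_0$. For the first required convergence, pairing against $f\in\mathcal{A}_*$ yields
\begin{equation*}
\langle f,\,an_\gamma-\varphi(a)n_\gamma\rangle=\langle \tau^{*}(f),\,a\cdot u_\gamma-u_\gamma\cdot a\rangle=\langle f\otimes\varphi,\,a\cdot u_\gamma-u_\gamma\cdot a\rangle,
\end{equation*}
so the lemma reduces to showing that this pairing tends to $0$.

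The main obstacle is to verify $f\otimes\varphi\in\sigma wc(\mathcal{A}\hat{\otimes}\mathcal{A})^{*}$ for every $f\in\mathcal{A}_*$; once this is established, the displayed pairing vanishes in the limit by the given $wk^*$-convergence in $(\sigma wc(\mathcal{A}\hat{\otimes}\mathcal{A})^{*})^{*}$. The left-action map $a\mapsto a\cdot(f\otimes\varphi)=\varphi(a)(f\otimes\varphi)$ is $wk^*$-to-norm continuous since $\varphi\in\mathcal{A}_*$. For the right action, a quick computation gives $(f\otimes\varphi)\cdot a=f_a\otimes\varphi$, where $f_a(x):=f(ax)$. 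Separate $wk^*$-continuity of multiplication in $\mathcal{A}$ combined with $f\in\mathcal{A}_*$ shows that $a\mapsto f_a$ is $wk^*$-to-$\sigma(\mathcal{A}_*,\mathcal{A})$-continuous from $\mathcal{A}$ into $\mathcal{A}_*$, since for each $y\in\mathcal{A}$ the map $a\mapsto\langle y,f_a\rangle=f(ay)$ is $wk^*$-continuous. Because $\sigma(\mathcal{A}_*,\mathcal{A})$ is the weak topology of the Banach space $\mathcal{A}_*$ (as $(\mathcal{A}_*)^{*}=\mathcal{A}$), and the bounded linear map $g\mapsto g\otimes\varphi$ from $\mathcal{A}_*$ into $(\mathcal{A}\hat{\otimes}\mathcal{A})^{*}$ is weak-weak continuous, the composition $a\mapsto f_a\otimes\varphi$ is $wk^*$-$wk$-continuous, completing the verification and hence the proof.
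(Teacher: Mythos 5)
Your proof is correct, and its technical core coincides with the paper's: the transfer map $x\otimes y\mapsto\varphi(y)x$ (the paper's $\theta$), the module identities it satisfies, the fact that its adjoint carries $\mathcal{A}_*$ into $\sigma wc(\mathcal{A}\hat{\otimes}\mathcal{A})^{*}$, and the identity $\varphi\circ\theta=\varphi\circ\pi$ used to get $\varphi(n_\alpha)\rightarrow1$. The structural difference is where the transfer is performed. The paper stays with the net $(m_\alpha)$ in $(\sigma wc(\mathcal{A}\hat{\otimes}\mathcal{A})^{*})^{*}$ and applies the dualized map $\tau=(\theta^{*}\vert_{\mathcal{A}_*})^{*}$ to it, which then requires a Goldstine/$wk^*$-density argument to establish $\langle\varphi,\pi_{\sigma wc}(m_\alpha)\rangle=\langle\varphi,\tau(m_\alpha)\rangle$. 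You instead first invoke Proposition \ref{p2.2} to replace $(m_\alpha)$ by a net $(u_\gamma)$ in $\mathcal{A}\hat{\otimes}\mathcal{A}$ itself, after which $\theta$ is applied elementwise, the identity $\varphi(\tau(u_\gamma))=\varphi(\pi(u_\gamma))$ is immediate, and the required convergences follow by pairing with $\tau^{*}(f)=f\otimes\varphi$; the cost is that you import the iterated-limit construction hidden in Proposition \ref{p2.2}, and the benefit is that no duality on the quotient space $(\sigma wc(\mathcal{A}\hat{\otimes}\mathcal{A})^{*})^{*}$ is needed. Your verification that $f\otimes\varphi\in\sigma wc(\mathcal{A}\hat{\otimes}\mathcal{A})^{*}$ --- left action $wk^*$-to-norm continuous because $\varphi\in\mathcal{A}_*$, right action $wk^*$-$wk$ continuous via separate $wk^*$-continuity of multiplication together with weak-weak continuity of the bounded map $g\mapsto g\otimes\varphi$ --- is more explicit than the paper's corresponding ``one can see that'' step, and both arguments use only the one-sided condition $a_0\pi(u_\gamma)\overset{wk^*}{\rightarrow}a_0$ at a single $a_0$ with $\varphi(a_0)=1$, so nothing is lost in your reduction to pseudo-Connes amenability.
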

	\begin{proof}
Since $\mathcal{A}$ is strong pseudo-Connes amenable, there is a net $(m_{\alpha})$ in $(\sigma{wc}(\mathcal{A}\hat{\otimes}\mathcal{A})^{\ast})^{\ast}$	such that for every $a\in{\mathcal{A}}$
\begin{equation*}
a\cdot m_{\alpha}-m_{\alpha}\cdot a\overset{wk^*}{\longrightarrow}0,\quad
a\pi_{\sigma{wc}}(m_{\alpha})=\pi_{\sigma{wc}}(m_{\alpha})a\overset{wk^*}{\longrightarrow}a.
\end{equation*}
Define $\theta:\mathcal{A}\hat{\otimes}\mathcal{A}\rightarrow\mathcal{A}$ by $\theta(a\otimes b)=\varphi(b)a$ for every $a,b\in{\mathcal{A}}$.  It is easy to see that 
\begin{equation}\label{e2.4}
a\cdot\theta^*(f)=\varphi(a)\theta^*(f),\quad \theta^*(f)\cdot a=\theta^*(f\cdot a)\quad(a\in{\mathcal{A}}, f\in{{\mathcal{A}}^{*}}),
\end{equation}
and also
\begin{equation}\label{e2.5}
\langle\theta(u),\varphi\rangle=\langle\pi(u),\varphi\rangle\qquad(u\in{\mathcal{A}\hat{\otimes}\mathcal{A}}).
\end{equation} 
Since $\varphi$ is $wk^*$-continuous, by (\ref{e2.4}) one can see that
\begin{equation*}
\theta^*(\mathcal{A}_*)\subseteq\sigma{wc}(\mathcal{A}\hat{\otimes}\mathcal{A})^{\ast}.
\end{equation*}
Define $\tau:=(\theta^*\vert_{\mathcal{A}_*})^*:(\sigma{wc}(\mathcal{A}\hat{\otimes}\mathcal{A})^{\ast})^\ast\rightarrow\mathcal{A}$. Let $n_{\alpha}=\tau(m_{\alpha})$ for every $\alpha$. For each $a\in{\mathcal{A}}$ and $f\in{{\mathcal{A}}_{*}}$ by (\ref{e2.4}), we have
\begin{equation}\label{e2.6}
\langle a n_{\alpha},f\rangle=\langle a\tau(m_{\alpha}),f\rangle=\langle\tau(m_{\alpha}),f\cdot a \rangle=\langle m_{\alpha},\theta^*(f\cdot a)\rangle=\langle m_{\alpha},\theta^*(f)\cdot a\rangle=\langle a\cdot m_{\alpha},\theta^*(f)\rangle,
\end{equation}
and also
\begin{equation}\label{e2.7}
\begin{split}
\langle\varphi(a)n_{\alpha},f\rangle&=\langle\tau(m_{\alpha}),\varphi(a)f\rangle=\langle m_{\alpha},\theta^*(\varphi(a)f)\rangle=\langle m_{\alpha},\varphi(a)\theta^*(f)\rangle\\&=\langle m_{\alpha},a\cdot\theta^*(f)\rangle=\langle m_{\alpha}\cdot a,\theta^*(f)\rangle.
\end{split}
\end{equation}
Since $\lim\limits_{\alpha}\langle a\cdot m_{\alpha}-m_{\alpha}\cdot a,\theta^*(f)\rangle=0$, by (\ref{e2.6}) and (\ref{e2.7}) we have $a n_\alpha-\varphi(a)n_\alpha\overset{wk^*}{\rightarrow}0$. By Goldstein's Theorem for every $u\in{(\sigma{wc}(\mathcal{A}\hat{\otimes}\mathcal{A})^{\ast})^{\ast}}$ there is a net $(x_{\alpha})$ in $\mathcal{A}\hat{\otimes}\mathcal{A}$ such that $wk^*\hbox{-}\lim\limits_{\alpha}x_\alpha=u$ in $(\sigma{wc}(\mathcal{A}\hat{\otimes}\mathcal{A})^{\ast})^{\ast}$. One can see that $\pi_{\sigma{wc}}(u)=wk^*\hbox{-}\lim\limits_{\alpha}\pi_{\sigma{wc}}(x_\alpha)=wk^*\hbox{-}\lim\limits_{\alpha}\pi(x_\alpha)$ and $\tau(u)=wk^*\hbox{-}\lim\limits_{\alpha}\tau(x_\alpha)=wk^*\hbox{-}\lim\limits_{\alpha}\theta(x_\alpha)$. So by (\ref{e2.5}) we have
\begin{equation*}
\langle\varphi,\pi_{\sigma{wc}}(u)\rangle=\lim\limits_{\alpha}\langle\varphi,\pi(x_\alpha)\rangle=\lim\limits_{\alpha}\langle\varphi,\theta(x_\alpha)\rangle=\langle\varphi,\tau(u)\rangle.
\end{equation*}
So for every $\alpha$
\begin{equation}\label{e2.8}
\langle\varphi,\pi_{\sigma{wc}}(m_{\alpha})\rangle=\langle\varphi,\tau(m_{\alpha})\rangle.
\end{equation}
Since $\varphi$ is $wk^*$-continuous, by \cite[Chapter V; Theorem 1.3]{Con:85} $\varphi\in{\mathcal{A}_*}$. So $\varphi(\pi_{\sigma{wc}}(m_{\alpha}))\varphi(a)\rightarrow\varphi(a)$. Thus $\varphi(\pi_{\sigma{wc}}(m_{\alpha}))\rightarrow1$ in $\mathbb{C}$. By (\ref{e2.8}), $\varphi(n_{\alpha})\rightarrow1$.
\end{proof}
\begin{Remark}\label{R12}
	Note that in the proof of Lemma \ref{l2.12} with the same conditions, if we define $\theta:\mathcal{A}\hat{\otimes}\mathcal{A}\rightarrow\mathcal{A}$ by $\theta(a\otimes b)=\varphi(a)b$ for every $a,b\in{\mathcal{A}}$, then there exists  a net $(n_\alpha)$ in $\mathcal{A}$ such that 
	\begin{equation*}
	n_\alpha a-\varphi(a)n_\alpha\overset{wk^*}{\rightarrow}0,\qquad\varphi(n_\alpha)\rightarrow1\quad(a\in{\mathcal{A}}).
	\end{equation*}
\end{Remark}
\section{Some applications}
Following \cite{eslam:04}, Let $\mathcal{A}$ be a Banach algebra, $I$ and $J$ be arbitrary nonempty index sets
and let $P$ be a $J\times I$ matrix over $\mathcal{A}$ such that $\Vert P\Vert_{\infty}=\sup\{\Vert P_{j,i}\Vert:j\in{J},i\in{I}\}\leq1$. The set 
of all $I\times J$ matrices over $\mathcal{A}$ with finite $\ell^1$-norm and product $XY=XPY$ is a Banach algebra, which is denoted by $LM(\mathcal{A},P)$ and it
is called the $\ell^1$-Munn $I\times J$ matrix algebra over $\mathcal{A}$ with sandwich matrix $P$.\\
 $M_{I}(\mathbb{C})$ the Banach algebra of $I\times I$-matrices over $\mathbb{C}$, with finite $\ell^1$-norm and matrix multiplication is a dual $\ell^1$-Munn algebra \cite{shoj:09}.
\begin{Theorem}
	Let $I$ be a non-empty set. Then $\mathbb{M}_{I}(\mathbb{C})$ is strong pseudo-Connes amenable if and only if $I$ is finite.
\end{Theorem}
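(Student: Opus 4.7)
My plan is to split the proof into the two implications. For the ``$I$ finite'' direction, I would observe that $\mathbb{M}_I(\mathbb{C})$ is then isomorphic to $M_n(\mathbb{C})$ with $n=|I|$, a finite-dimensional (hence reflexive, hence dual) $C^*$-algebra. It is amenable and therefore Connes amenable, so Proposition \ref{P2.5} immediately gives that it is strong pseudo-Connes amenable. This direction is routine.

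For the converse, I would argue by contradiction: assume $I$ is infinite and let $(m_\alpha)$ be a net witnessing strong pseudo-Connes amenability. The key observation, which carries the whole proof, is that condition (ii) of Definition \ref{d1} includes the \emph{exact equality} $a\pi_{\sigma wc}(m_\alpha) = \pi_{\sigma wc}(m_\alpha)a$ for every $a\in\mathbb{M}_I(\mathbb{C})$ and every index $\alpha$ (not merely in the $wk^*$-limit). Setting $e_\alpha := \pi_{\sigma wc}(m_\alpha)\in\mathbb{M}_I(\mathbb{C})$, this says each $e_\alpha$ lies in the center $Z(\mathbb{M}_I(\mathbb{C}))$.

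Next I would compute this center by a standard matrix-unit calculation: comparing $eE_{k,l}$ with $E_{k,l}e$ entrywise forces $e$ to be diagonal with all diagonal entries equal, i.e., $e=c\cdot\mathrm{I}$ for some scalar $c$. Since the identity matrix has $\ell^1$-norm equal to $|I|=\infty$, the only element of the center is $0$. Hence $e_\alpha=0$ for all $\alpha$, and the $wk^*$-convergence $ae_\alpha\overset{wk^*}{\to}a$ in condition (ii) collapses to $0\overset{wk^*}{\to}a$, forcing $a=0$ for every $a\in\mathbb{M}_I(\mathbb{C})$; this contradicts $E_{1,1}\neq 0$.

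The main (and essentially only) obstacle is conceptual rather than technical: one must notice that strong pseudo-Connes amenability enforces centrality of each $e_\alpha$ pointwise in $\alpha$, not just in the $wk^*$-limit. Once that is recognized, the rest of the argument reduces to the elementary center computation above; there is no need to invoke Lemma \ref{l2.12}, which in any case does not apply since $\mathbb{M}_I(\mathbb{C})$ carries no multiplicative characters when $|I|\geq 2$.
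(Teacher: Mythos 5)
Your proof is correct under the definition as literally stated, but the hard direction takes a genuinely different route from the paper's. You exploit the fact that condition (ii) of Definition \ref{d1} asserts the \emph{exact} identity $a\pi_{\sigma wc}(m_\alpha)=\pi_{\sigma wc}(m_\alpha)a$ for each fixed $\alpha$, so that every $e_\alpha=\pi_{\sigma wc}(m_\alpha)$ lies in the centre of $\mathbb{M}_I(\mathbb{C})$; the matrix-unit computation shows this centre is $\{0\}$ for infinite $I$ (since $c\cdot\mathrm{I}$ has infinite $\ell^1$-norm), and the contradiction is immediate. The paper never uses this pointwise centrality: it works only with the asymptotic relation. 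It approximates $m_\alpha$ in the $wk^*$-topology by elements of $\mathcal{A}\hat{\otimes}\mathcal{A}$, passes to a diagonal net via the iterated limit theorem, sets $y_\gamma=\pi(n_\gamma)$, and runs an entrywise analysis: $ay_\gamma-y_\gamma a\overset{wk^*}{\longrightarrow}0$ forces the diagonal entries of $y_\gamma$ to a common limit $l$ and the off-diagonal entries to $0$; then $l=0$ contradicts $ay_\gamma\overset{wk^*}{\longrightarrow}a$, while $l\neq0$ produces a weak limit $l\cdot\mathrm{I}$ lying in $\overline{\mathrm{Conv}(y_{\gamma_k})}^{\Vert\cdot\Vert}\subseteq\mathbb{M}_I(\mathbb{C})$, contradicting the finiteness of the $\ell^1$-norm. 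Your route buys brevity and in fact proves more: any nonzero dual Banach algebra with trivial centre fails strong pseudo-Connes amenability outright. What the paper's route buys is robustness: it survives if the equality in (ii) is weakened to $a\pi_{\sigma wc}(m_\alpha)-\pi_{\sigma wc}(m_\alpha)a\overset{wk^*}{\longrightarrow}0$ (the reading under which the notion is closer to pseudo-Connes amenability), whereas your argument collapses entirely in that case; it also avoids any appeal to the per-$\alpha$ structure by trading it for a boundedness claim on $(y_\gamma)$ that your proof does not need. Your treatment of the finite case coincides with the paper's appeal to Connes amenability and Proposition \ref{P2.5}, and your remark that Lemma \ref{l2.12} is unavailable here is apt, since $\mathbb{M}_I(\mathbb{C})$ has no $wk^*$-continuous characters for $|I|\geq2$.
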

\begin{proof}
	Let $\mathcal{A}=\mathbb{M}_{I}(\mathbb{C})$ be strong pseudo-Connes amenable. Then there is a net $(m_{\alpha})$ in $(\sigma{wc}(\mathcal{A}\hat{\otimes}\mathcal{A})^{\ast})^{\ast}$	such that for every $a\in{\mathcal{A}}$
	\begin{equation*}
	a\cdot m_{\alpha}-m_{\alpha}\cdot a\overset{wk^*}{\longrightarrow}0,\quad
	a\pi_{\sigma{wc}}(m_{\alpha})=\pi_{\sigma{wc}}(m_{\alpha})a\overset{wk^*}{\longrightarrow}a.
	\end{equation*}
	Let $a$ be a non-zero element of $\mathcal{A}$. Then there is a $\psi$ in $\mathcal{A}_{\ast}$ such that $a(\psi)\neq0$. By similar argument as in Proposition \ref{p2.2}, there is a bounded net $(n_\gamma)$ in $\mathcal{A}\hat{\otimes}\mathcal{A}$ such that,  $wk^*\hbox{-}\lim\limits_{\gamma}a\cdot n_\gamma-n_\gamma\cdot a=0$ in $(\sigma{wc}(\mathcal{A}\hat{\otimes}\mathcal{A})^{\ast})^*$ and $wk^*\hbox{-}\lim\limits_{\gamma}a\pi_{\sigma{wc}}(n_\gamma)=wk^*\hbox{-}\lim\limits_{\gamma}\pi_{\sigma{wc}}(n_\gamma)a=a$ in $\mathcal{A}$. So $wk^*\hbox{-}\lim\limits_{\gamma}a\pi_{\sigma{wc}}(n_\gamma)-\pi_{\sigma{wc}}(n_\gamma)a=0$.
	For every $f\in{\mathcal{A}_{*}}$, we have
	\begin{equation*}
	\langle f,\pi_{\sigma{wc}}(n_\gamma)\rangle=\langle\pi^*\vert_{\mathcal{A}_{\ast}}(f),(n_\gamma)\rangle=\langle\pi^*(f),n_\gamma\rangle=\langle f,\pi(n_\gamma)\rangle.
	\end{equation*}
	So $	wk^*\hbox{-}\lim\limits_{\gamma}a\pi(n_\gamma)-\pi(n_\gamma)a=0$ and $wk^*\hbox{-}\lim\limits_{\gamma}a\pi(n_\gamma)=wk^*\hbox{-}\lim\limits_{\gamma}\pi(n_\gamma)a=a$ in $\mathcal{A}$.  
	Let $y_{\gamma}=\pi(n_\gamma)$. Then $(y_{\gamma})$ is a bounded net in $\mathcal{A}$ which satisfies
	\begin{equation}\label{eq31}
	wk^*\hbox{-}\lim\limits_{\gamma}ay_{\gamma}-y_{\gamma}a=0\quad\hbox{and}\quad	wk^*\hbox{-}\lim\limits_{\gamma}ay_{\gamma}=wk^*\hbox{-}\lim\limits_{\gamma}y_{\gamma}a=a\quad(a\in{\mathcal{A}}). 
	\end{equation}	
	By similar argument as in \cite[Theorem 3.2]{Sha:17},
	Suppose that $y_{\gamma}=[y_{\gamma}^{i,j}]$, where $y_{\gamma}^{i,j}\in{\mathbb{C}}$ for every $i,j$. Fixed $i_{0}\in{I}$, for every $j\in{I}$ we have 
	\begin{equation*}
	\varepsilon_{i_0,j}y_{\gamma}-y_{\gamma}\varepsilon_{i_0,j}=\sum\limits_{\underset{i\neq{j}}{i\in{I}}}y_{\gamma}^{j,i}\varepsilon_{i_0,i}+(y_{\gamma}^{j,j}-y_{\gamma}^{i_0,i_0})\varepsilon_{i_0,j}-\sum\limits_{\underset{i\neq{i_0}}{i\in{I}}}y_{\gamma}^{i,i_0}\varepsilon_{i,j},
	\end{equation*}
	where $\varepsilon_{i,j}$ is a matrix belongs to $\mathbb{M}_{I}(\mathbb{C})$ which $(i,j)$-th entry is $1$ and others are zero. Let $$X_{\gamma}=[X^{i,j}_\gamma]=\varepsilon_{i_0,j}y_{\gamma}-y_{\gamma}\varepsilon_{i_0,j}.$$ 
	(\ref{eq31}) implies that
	$wk^*\hbox{-}\lim\limits_{\gamma}X_{\gamma}=0$. Consider $\varepsilon_{i_0,j}$, $\varepsilon_{i_0,i}$ as elements in $\mathcal{A}_*$, whenever $i\neq j$ in ${I}$. So
	\begin{equation}\label{eq3.3}
	\lim\limits_{\gamma}y_{\gamma}^{j,j}-y_{\gamma}^{i_0,i_0}=\lim\limits_{\gamma}X^{i_0,j}_\gamma=\lim\limits_{\gamma}\langle\varepsilon_{i_0,j},X_\gamma\rangle=0,
	\end{equation}
	and
	\begin{equation}\label{eq3.4}
	\lim\limits_{\gamma}y_{\gamma}^{j,i}=\lim\limits_{\gamma}X^{i_0,i}_\gamma=\lim\limits_{\gamma}\langle\varepsilon_{i_0,i},X_\gamma\rangle=0.
	\end{equation}
	Since $\Vert{y}_{\gamma}\Vert\leq\Vert{m_{\alpha}}\Vert$, $(y_{\gamma}^{i_0,i_0})$ is a bounded net in $\mathbb{C}$. So it has a convergent subnet $(y_{\gamma_{k}}^{i_0,i_0})$ in $\mathbb{C}$. We may assume  that $\lim\limits_{\gamma_k}y_{\gamma_{k}}^{i_0,i_0}=l$. By (\ref{eq3.3}) $\lim\limits_{\gamma_k}y_{\gamma_{k}}^{i_0,i_0}-y_{\gamma_{k}}^{j,j}=0$. It follows that $\lim\limits_{\gamma_{k}}y_{\gamma_{k}}^{j,j}=l$ for every $j\in{I}$. If $l=0$, then by (\ref{eq3.4}) for every $i,j\in{I}$, $\lim\limits_{\gamma_{k}}y_{\gamma_{k}}^{i,j}=0$ in $\mathbb{C}$. So $wk$-$\lim\limits_{\gamma_{k}}y_{\gamma_{k}}^{i,j}=0$, where $i,j\in{I}$. Applying \cite[Theorem 4.3]{scha:71}, $wk$-$\lim\limits_{\gamma_{k}}y_{\gamma_{k}}=0$ in $\mathcal{A}$. It follows that $\lim\limits_{\gamma_{k}}\langle{y}_{\gamma_k},a\cdot{\psi}\rangle=0$. On the other hand (\ref{eq31})
	\begin{equation*}
	\lim\limits_{\gamma_{k}}\langle{y}_{\gamma_k},a\cdot{\psi}\rangle=\lim\limits_{\gamma_{k}}\langle{a}\cdot{\psi},{y}_{\gamma_k}\rangle=\lim\limits_{\gamma_{k}}\langle{\psi},{y}_{\gamma_k} a\rangle=\langle\psi,a\rangle\neq0,
	\end{equation*}
	which is a contradiction. So $wk$-$\lim\limits_{\gamma_{k}}y_{\gamma_{k}}^{j,j}=l\neq0$ for every $j\in{I}$. Using (\ref{eq3.4}) we have $wk$-$\lim\limits_{\gamma_k}y_{\gamma_k}^{j,i}=0$ whenever $j\neq{i}$ in $I$. Applying \cite[Theorem 4.3]{scha:71} again, $wk$-$\lim\limits_{\gamma_{k}}y_{\gamma_{k}}=y_{0}$, where $y_0$ is a matrix with $l$ in the diagonal position and $0$ elsewhere. Thus $y_0\in{\overline{Conv(y_{\gamma_k})}}^{wk}={\overline{Conv(y_{\gamma_k})}}^{\Vert\cdot\Vert}$. So $y_0\in{\mathcal{A}}$. But 
	\begin{equation*}
	\infty=\sum\limits_{j\in{I}}\vert l\vert=\sum\limits_{j\in{I}}\vert y_0^{j,j}\vert=\Vert y_0\Vert<\infty,
	\end{equation*}
	which is a contradiction. So $I$ must be finite.\\
	Converesely, if $I$ is finite, then $\mathbb{M}_{I}(\mathbb{C})$ is Connes amenable \cite[Theorem 3.7]{Mah:2016}. So by Proposition \ref{P2.5}, $\mathbb{M}_{I}(\mathbb{C})$ is 
	strong pseudo-Connes amenable. 
\end{proof}
 Let $\mathcal{A}$ be a dual Banach algebra and let $I$ be a totally ordered set. Then the set of all $I\times{I}$-upper triangular matrices
with the usual matrix operations and the norm $\parallel[a_{i,j}]_{{i,j}\in{I}}\parallel=\sum\limits _{i,j\in{I}}\parallel{a}_{i,j}\parallel<\infty$, becomes a Banach algebra and it is denoted by
$$UP(I,\mathcal{A})=\set{\left[
	\begin{array}{rr} a_{i,j}  \end{array} \right]_{i,j\in I};  a_{i,j}\in A {\hbox{ and\, $a_{i,j}=0$ \,for every } }i>j }.$$
Authors showed that $\A$ is a dual Banach algebra \cite[Theorem 3.1]{Shar:18}.
\begin{Theorem}\label{T7}
	Let $\mathcal{A}$ be a dual Banach algebra with $\varphi\in{\Delta_{wk^*}(\mathcal{A})}$ and let $I$ be a totally ordered set with smallest element. Then $\A$ is strong pseudo-Connes amenable if and only if $\mathcal{A}$ is strong pseudo-Connes amenable and $\vert I\vert=1$.
\end{Theorem}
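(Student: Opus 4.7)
The proof splits into an easy converse and the substantive forward implication. When $|I|=1$, the algebra $UP(I,\mathcal{A})$ is canonically isomorphic to $\mathcal{A}$ as a dual Banach algebra, so strong pseudo-Connes amenability transfers automatically; this handles the ``$\Leftarrow$'' direction.

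For the other implication, the plan is to assume $UP(I,\mathcal{A})$ is strong pseudo-Connes amenable and force $|I|=1$ via Lemma \ref{l2.12}. For each $k\in I$ one checks that
$$\tilde{\varphi}_k\bigl([a_{i,j}]\bigr)=\varphi(a_{k,k})$$
is a character on $UP(I,\mathcal{A})$, using the fact that in the product of two upper triangular matrices the $(k,k)$-entry is just $a_{k,k}b_{k,k}$; moreover $\tilde{\varphi}_k$ is $wk^*$-continuous since it is implemented by the predual functional $\delta_{(k,k)}\otimes\varphi$ in the dual Banach algebra structure on $UP(I,\mathcal{A})$ from \cite{Shar:18} (note $\varphi\in\mathcal{A}_*$ because $\varphi$ is $wk^*$-continuous). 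Thus $\tilde{\varphi}_k\in\Delta_{wk^*}(UP(I,\mathcal{A}))$.

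Now suppose, for a contradiction, that $|I|\geq 2$ and pick any $i_0<j_0$ in $I$. Applying Lemma \ref{l2.12} to $UP(I,\mathcal{A})$ with the character $\tilde{\varphi}_{j_0}$ produces a net $(N_\alpha)=([n_\alpha^{i,j}])$ in $UP(I,\mathcal{A})$ with
$$BN_\alpha-\tilde{\varphi}_{j_0}(B)N_\alpha\overset{wk^*}{\longrightarrow}0 \text{ for all } B,\qquad \varphi(n_\alpha^{j_0,j_0})=\tilde{\varphi}_{j_0}(N_\alpha)\longrightarrow 1.$$
Fix $a\in\mathcal{A}$ with $\varphi(a)=1$ and let $B\in UP(I,\mathcal{A})$ be the matrix having $a$ in the $(i_0,j_0)$-entry and zeros elsewhere, which is upper triangular since $i_0<j_0$. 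Then $\tilde{\varphi}_{j_0}(B)=\varphi(0)=0$, while a direct computation shows the $(i_0,j_0)$-entry of $BN_\alpha$ equals $a\,n_\alpha^{j_0,j_0}$ (the only surviving term in the matrix product). Pairing $BN_\alpha$ with the predual functional $\delta_{(i_0,j_0)}\otimes\varphi$ therefore yields $\varphi(a)\varphi(n_\alpha^{j_0,j_0})=\varphi(n_\alpha^{j_0,j_0})\to 0$, contradicting the convergence to $1$ above. Hence $|I|=1$, and $\mathcal{A}\cong UP(I,\mathcal{A})$ is strong pseudo-Connes amenable.

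The main technical point to tighten is the explicit description of $UP(I,\mathcal{A})$ as a dual Banach algebra, so that ``read off a single entry'' functionals are genuinely $wk^*$-continuous predual pairings; this is precisely what the $\ell^1$-over-upper-triangular-indices structure in \cite{Shar:18} provides (with $c_0$-type predual of $\mathcal{A}_*$'s). Once this preduality is in hand, the proof reduces to the clean character-plus-matrix-unit manipulation above, in the same spirit as the treatment of $\mathbb{M}_I(\mathbb{C})$ earlier in the paper.
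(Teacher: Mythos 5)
Your proof is correct and takes essentially the same route as the paper's: both apply Lemma \ref{l2.12} to a $wk^*$-continuous character of $UP(I,\mathcal{A})$ obtained by composing $\varphi$ with a diagonal coordinate, and both reach a contradiction by multiplying the resulting net by a strictly upper-triangular matrix unit and invoking the $wk^*$-continuity of $\varphi$. The only differences are organizational: the paper places the character at the smallest element $i_0$, uses the right-multiplication variant (Remark \ref{R12}) and first compresses the net into the ideal of row-$i_0$ matrices, while you place it at $j_0$ and use the left-multiplication form of Lemma \ref{l2.12} directly --- a mild streamlining that never actually needs the smallest-element hypothesis.
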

\begin{proof}
Let $\A$ be strong pseudo-Connes amenable. Assume that $i_0$ be a smallest element and  $\varphi\in{\Delta_{{wk}^{\ast}}{(\mathcal{A})}}$. We define a map $\psi:\A\longrightarrow\mathbb{C}$ by          $\left[ a_{i,j}\right] _{i,j\in{I}}\longmapsto\varphi{(a_{i_{0},i_{0}})}$ for every $\left[ a_{i,j}\right] _{i,j\in{I}}\in{\A}$.
Since $\varphi$ is $wk^\ast$-continuous, $\psi\in{\Delta_{{wk}^{\ast}}{(\A)}}$. By Lemma \ref{l2.12} and Remark \ref{R12}, there exists a net $(n_\alpha)$ in $\A$ such that 
\begin{equation*}
n_\alpha a-\psi(a)n_\alpha\overset{wk^*}{\rightarrow}0,\qquad\psi(n_\alpha)\rightarrow1\quad(a\in{\A}).
\end{equation*}
 Using the argument of \cite[Theorem 3.1]{Sah:18}, Let
 \begin{equation*}
 J=\big\{\left[ a_{i,j}\right] _{i,j\in{I}}\in{\A}\mid a_{i,j}=0\quad\forall i\neq i_0\big\}.
 \end{equation*}
 One can see that $J$ is a $wk^*$-closed ideal in $\A$ and $\psi\vert_J\neq0$. Consider $j_0\in{J}$ such that $\psi(j_0)=1$. Let $m_\alpha=j_0 n_\alpha$. since the multiplication in $UP(I,\mathcal{A})$ is separately $wk^\ast$-continuous \cite[Exercise 4.4.1]{Runde:2002}, we may assume that $(m_\alpha)$ is a net in $J$ such that 
 \begin{equation}\label{eq34}
m_\alpha a-\psi(a)m_\alpha\overset{wk^*}{\rightarrow}0,\qquad\psi(m_\alpha)\rightarrow1\quad(a\in{J}).
 \end{equation}
Suppose that $\lvert{I}\rvert>1$ and $m_\alpha$ has a form $\left(\begin{array}{ccc}a^\alpha_{i_0,i_0}&a^\alpha_{i_0,i}&\cdots\\
	0&0&\cdots\\
	\colon&\cdots&\colon
\end{array}
\right)$, for some nets $(a^\alpha_{i_0,i_0})$, $(a^\alpha_{i_0,i})$ in $\mathcal{A}$. So $\varphi(a^\alpha_{i_0,i_0})=\psi(m_\alpha)\rightarrow1$. Consider $x\in{\mathcal{A}}$ such that $\varphi(x)=1$. Let $a=\left(
\begin{array}{cccc}
0&x&0&\cdots\\
0&0&0&\cdots\\
\colon&\colon&\cdots&\colon
\end{array}
\right)$ in $J$. Since $\psi(a)=0$, (\ref{eq34}) implies that $m_\alpha a\overset{wk^*}{\rightarrow}0$. By a simple computation $a^\alpha_{i_0,i_0}x\overset{wk^*}{\rightarrow}0$. Since $\varphi$ is $wk^*$-continuous,
\begin{equation*}
\varphi(a^\alpha_{i_0,i_0})=\varphi(a^\alpha_{i_0,i_0})\varphi(x)=\varphi(a^\alpha_{i_0,i_0}x){\rightarrow}0,
\end{equation*}
  which is a contradiction. So $\lvert{I}\rvert=1$ and $\mathcal{A}=\A$ is strong pseudo-Connes amenable.

Converse is clear.	
\end{proof}
\section{Examples}
\begin{Example}
		Consider the Banach algebra $\ell^1$ of all sequences $a=(a_n)$ of complex numbers with 
	\begin{equation*}
	\Vert{a}\Vert=\sum\limits_{n=1}^\infty\vert a_n\vert<\infty,
	\end{equation*}
	and the following product
	\begin{equation*}
	(a\ast b)(n)=\left\{
	\begin{array}{ll}
	a(1)b(1)& \hbox{if}\quad n=1\\
	a(1)b(n)+b(1)a(n)+a(n)b(n)&\hbox{if}\quad n>1
	\end{array}
	\right.
	\end{equation*}
	for every $a,b\in{\ell^1}$. It is easy to see that $\Delta(\ell^1)=\{\varphi_1\}\cup\{\varphi_1+\varphi_n:n\geq2\}$, where $\varphi_n(a)=a(n)$ for every $a\in{\ell^1}$. 	By similar argument as in \cite[Example 4.1]{Sha:17},  $(\ell^1,*)$ is a dual Banach algebra with respect to $c_0$.  We show that $c_0$ is an $\ell^1$-module with dual actions. In fact for every $a\in{\ell^1}$ and $\lambda\in{c_0}$ we have
	\begin{equation*}
	a\cdot\lambda(n)=\left\{
	\begin{array}{ll}
	\sum\limits_{k=1}^\infty a(k)\lambda(k)& \hbox{if}\quad n=1\\
	(a(1)+a(n))\lambda(n)&\hbox{if}\quad n>1.
	\end{array}
	\right.
	\end{equation*}
	Since $\lambda$ vanishes at infinity and $\underset{n}{\sup}\vert a(n)\vert<\infty$, one can see that $a\cdot\lambda$ vanishes at infinity and  similarity for the right action. So $c_0$ is a closed $\ell^1$-submodule of $\ell^\infty$. We claim that $\ell^1$ is not strong pseudo-Connes amenable. Suppose conversely that $\ell^1$ is strong pseudo-Connes amenable. Since $\varphi_1$ is $wk^*$-continuous, by Lemma \ref{l2.12} there is a bounded net $(m_{\alpha})$ in $\ell^1$ that satisfies
\begin{equation}\label{eq41}
a*{m_{\alpha}}-\varphi_1(a){m_{\alpha}}{\overset{wk^\ast}{\longrightarrow}}0\quad\hbox{and}\quad\varphi_1(m_{\alpha})\longrightarrow1\qquad({a}\in{\ell^1}).
\end{equation}
Using the argument of \cite[Example 4.1]{Sha:17} again. Choose $a=\delta_{n}$ in $\ell^1$, where $n\geq2$. So $\varphi_1(\delta_{n})=0$. (\ref{eq41}) implies that  $\delta_{n}*{m_{\alpha}}{\overset{wk^\ast}{\longrightarrow}}0$ in $\ell^1$. One can see that  $\delta_{n}\ast m_\alpha=({m}_\alpha(1)+{m}_\alpha(n))\delta_{n}$. Consider $\delta_{n}$ as an element in $c_0$, where $n\geq2$. So
\begin{equation*}
\lim\limits_{\alpha}\langle\delta_{n},\delta_{n}\ast m_\alpha\rangle=\lim\limits_{\alpha}{m}_\alpha(1)+{m}_\alpha(n)=0.
\end{equation*}
Since $\lim\limits_{\alpha}m_{\alpha}(1)=1$ and $\lim\limits_{\alpha}m_{\alpha}(n)=-1$ for every $n\geq2$, we have $\underset{\alpha}{\sup}\Vert m_\alpha\Vert=\infty$, which  contradicts  the boundedness of the net $(m_{\alpha})$.
\end{Example}
\begin{Example}
	Let $S$ be the set  of natural numbers $\mathbb{N}$ with the binary operation $(m,n)\longmapsto\max\{m,n\}$, where $m$ and $n$ are in $\mathbb{N}$. Then $S$ is a weakly cancellative semigroup, that is, for every $s,t\in{S}$ the set $\{x\in{S}:sx=t\}$ is finite. So $\ell^1(S)$ is a dual Banach algebra with predual $c_{0}(S)$ \cite[Theorem 4.6]{Dales:2010}. By \cite[Example 6.1]{Mahmoodi:14}, $\ell^1(S)$ is pseudo-Connes amenable. Since $\ell^1(S)$ is commutative, by Lemma \ref{l2.3} it is strong pseudo-Connes amenable. But $\ell^1(S)$ is not Connes amenable \cite[Theorem 5.13]{Daws:2006}.
\end{Example}	
\section{A note on ultra central approximate identity for Banach algebras}
Jabbari {\it et al.} have introduced the notion of $\varphi$-inner amenability \cite{Jab:11}. For a given Banach algebra $\mathcal{A}$, Let $\mathcal{A}_\varphi=\{a\in{\mathcal{A}}:\varphi(a)=1\}$, where $\varphi$ is a linear multiplication functional on $\mathcal{A}$. A Banach algebra $\mathcal{A}$ is called  $\varphi$-inner amenable if there exists
a bounded linear functional $m$ on $\mathcal{A}^*$ satisfying $m(\varphi)=1$ and $m(f\cdot a) =
m(a\cdot f)$ for every $f\in{\mathcal{A}^*}$ and for every $a\in{\mathcal{A}_\varphi}$. This notion is equivalent with the existence of a bounded
net $(a_{\alpha})$ in $\mathcal{A}$ such that $aa_{\alpha}-a_{\alpha}a\rightarrow0$ and $\varphi(a_{\alpha}) = 1$, for every $a\in{\mathcal{A}}$ \cite[Theorem 2.1]{Jab:11}.
\begin{Theorem}\label{T5.1}
	Let $\mathcal{A}$ be a Banach algebra and $\varphi\in\Delta(\mathcal{A})$. If $\mathcal{A}^{**}$ has an ultra central approximate identity, then $\mathcal{A}$ is $\varphi$-inner amenable.
\end{Theorem}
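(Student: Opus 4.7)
The plan is to construct a single bounded linear functional $m \in \mathcal{A}^{**}$ that directly witnesses the $\varphi$-inner amenability of $\mathcal{A}$; that is, with $m(\varphi) = 1$ and $m(f \cdot a) = m(a \cdot f)$ for every $f \in \mathcal{A}^{*}$ and $a \in \mathcal{A}_{\varphi}$. The functional $m$ will be obtained by restricting, in the canonical way, an appropriately normalized element of the ultra central approximate identity of $\mathcal{A}^{**}$ to the image of $\mathcal{A}^{*}$ in $\mathcal{A}^{***}$ under the canonical embedding.

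First I would extend $\varphi$ to characters $\tilde{\varphi} \in \Delta(\mathcal{A}^{**})$ and $\tilde{\tilde{\varphi}} \in \Delta(\mathcal{A}^{****})$ with respect to the (first) Arens product; under the canonical embeddings these extensions coincide with the images of $\varphi$. Letting $(E_{\alpha}) \subset \mathcal{A}^{****}$ denote the ultra central approximate identity for $\mathcal{A}^{**}$, I would apply $\tilde{\tilde{\varphi}}$ to the limit relation $E_{\alpha} \cdot a_{0} \to a_{0}$ for some fixed $a_{0} \in \mathcal{A}$ with $\varphi(a_{0}) = 1$, and use multiplicativity to conclude $\tilde{\tilde{\varphi}}(E_{\alpha}) \to 1$. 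Picking an index $\alpha_{0}$ with $c := \tilde{\tilde{\varphi}}(E_{\alpha_{0}}) \neq 0$, I would set $M := c^{-1} E_{\alpha_{0}}$, so that $\tilde{\tilde{\varphi}}(M) = 1$ while the commutation $a \cdot M = M \cdot a$ persists for all $a \in \mathcal{A}$.

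Then I would define $m(f) := M(\iota(f))$ for $f \in \mathcal{A}^{*}$, where $\iota : \mathcal{A}^{*} \hookrightarrow \mathcal{A}^{***}$ is the canonical embedding. Clearly $m \in \mathcal{A}^{**}$ and $m(\varphi) = \tilde{\tilde{\varphi}}(M) = 1$. For the module condition, I would verify the elementary identities $\iota(f \cdot a) = \iota(f) \cdot a$ and $\iota(a \cdot f) = a \cdot \iota(f)$ in $\mathcal{A}^{***}$, which are direct unwindings of the Arens actions, and chain them with the commutation $a \cdot M = M \cdot a$ to obtain
\begin{equation*}
m(f \cdot a) = M(\iota(f) \cdot a) = (a \cdot M)(\iota(f)) = (M \cdot a)(\iota(f)) = M(a \cdot \iota(f)) = m(a \cdot f)
\end{equation*}
for every $f \in \mathcal{A}^{*}$ and $a \in \mathcal{A}$ (in particular for $a \in \mathcal{A}_{\varphi}$), which is precisely the functional characterization of $\varphi$-inner amenability. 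The main obstacle will be the careful bookkeeping of the various Arens bimodule actions as they are transported up and down through the successive duals, together with the identification of the character extensions under the canonical embeddings; once these are settled, the argument is essentially formal and no approximation step is required.
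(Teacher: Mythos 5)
Your proof is correct, and it takes a genuinely different and more economical route than the paper's. The paper descends from $\mathcal{A}^{****}$ to $\mathcal{A}^{**}$ by approximation: it normalizes $e_{\alpha}$ so that $\tilde{\tilde{\varphi}}(e_{\alpha})=1$, uses Goldstine's theorem to produce a bounded net $(m^{\alpha}_{\beta})$ in $\mathcal{A}^{**}$ converging weak$^{*}$ to $e_{\alpha}$, passes from weak$^{*}$ to weak convergence of the commutators $am^{\alpha}_{\beta}-m^{\alpha}_{\beta}a$ to $0$ in $\mathcal{A}^{**}$, applies Mazur's theorem to a convex set to upgrade this to norm convergence, and finally extracts a weak$^{*}$ cluster point $N\in\mathcal{A}^{**}$ by Banach--Alaoglu satisfying $aN=Na$ and $\tilde{\varphi}(N)=1$. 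You instead observe that, since $\mathcal{A}^{**}$ is itself a dual space, there is a canonical norm-one projection $\iota^{*}:\mathcal{A}^{****}\to\mathcal{A}^{**}$ (restriction to $\iota(\mathcal{A}^{*})$), and that $\iota$ is an $\mathcal{A}$-bimodule map for the relevant Arens actions, namely $\iota(f)\cdot a=\iota(f\cdot a)$ and $a\cdot\iota(f)=\iota(a\cdot f)$; these identities check out by direct computation with the first Arens product, and together with $\iota(\varphi)=\tilde{\varphi}$ they make $m:=\iota^{*}(M)$ the required functional in one step. Your normalization step ($\tilde{\tilde{\varphi}}(E_{\alpha})\to 1$ via multiplicativity of the canonical character extension and $E_{\alpha}a_{0}\to a_{0}$) is the same as the paper's. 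What your approach buys is the complete elimination of the Goldstine/Mazur/Banach--Alaoglu machinery, at the cost of the careful bookkeeping of iterated Arens actions that you yourself flag; what the paper's approach buys is that the same approximation template transfers to situations where no canonical projection onto the lower dual is available. Both arguments only use the full approximate-identity property to locate a single index where the character does not vanish.
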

\begin{proof}
	Suppose that $\mathcal{A}^{**}$ has an ultra central approximate identity. Then there exists a net $(e_{\alpha})$ in $\mathcal{A}^{****}$ such that
	\begin{equation}\label{e5.1}
	ae_{\alpha}=e_{\alpha}a,\quad ae_{\alpha}\rightarrow a,\qquad (a\in \mathcal{A}^{**}).
	\end{equation}
	  We denote $\tilde{\varphi}$ for unique extension of $\varphi$ to $\mathcal{A}^{**}$ is defined by $\tilde{\varphi}(\lambda)=\lambda(\varphi)$ for every $\lambda\in{\mathcal{A}^{**}}$ and also $\tilde{\tilde{\varphi}}$ is denoted for the unique extension of $\tilde{\varphi} $ to $\mathcal{A}^{****}$ is defined in the same way. Clearly $\tilde{\tilde{\varphi}}\in\Delta(\mathcal{A}^{****})$ and  $\tilde{\varphi} \in \Delta(\mathcal{A}^{**})$. So $\tilde{\tilde{\varphi}}( e_{\alpha})\rightarrow 1$ for every $a\in \mathcal{A}^{**}$. Thus for a sufficient large $\alpha$, $\tilde{\tilde{\varphi}}(e_{\alpha})$ stays away from zero. Replacing $\frac{e_{\alpha}}{\tilde{\tilde{\varphi}}( e_{\alpha})}$ with $e_{\alpha}$, we may assume that $\tilde{\tilde{\varphi}}( e_{\alpha})=1$. By Goldstein's Theorem for every $\alpha$, there exists a net $(m^{\alpha}_{\beta})_{\beta}$ in $\mathcal{A}^{**}$ such that $ wk^*\hbox{-}\lim\limits_{\beta}m^{\alpha}_{\beta}=e_{\alpha}$ in $\mathcal{A}^{****}$ and $\Vert m^{\alpha}_{\beta}\Vert\leq \Vert e_{\alpha}\Vert$.
So by (\ref{e5.1}), $wk^*\hbox{-}\lim\limits_{\beta}am^{\alpha}_{\beta}-m^{\alpha}_{\beta}a=0$ in $\mathcal{A}^{****}$ for every $a\in{\mathcal{A}}$. Thus $wk\hbox{-}\lim\limits_{\beta}am^{\alpha}_{\beta}-m^{\alpha}_{\beta}a=0$ in $\mathcal{A}^{**}$ for every $a\in{\mathcal{A}}$. Let $F=\{a_{1},a_{2},...,a_{n}\}$ be an arbitrary finite subset of $\mathcal{A}$. Set $$V=\{(a_{1}b-ba_{1},a_{2}b-ba_{2},...,a_{n}b-ba_{n},\tilde{\varphi}(b)-1)|b\in \mathcal{A}^{**}, \Vert b\Vert\leq\Vert e_{\alpha}\Vert\}\leq \prod^{n}_{i=1}\mathcal{A}_{i}\oplus_{1} \mathbb{C},$$ where for every $i$, $\mathcal{A}_{i}=\mathcal{A}^{**}$. It is easy to see that $V$ is a convex set and $(0,0,...,0)$ is a $wk$-limit point of $V$. Since $(0,0,...,0)\in {\overline{V}}^{w}={\overline{V}}^{||\cdot||}$, there exists a bounded net $(n^{\alpha}_{\beta})_{\beta}$ in $\mathcal{A}^{**}$ such that $$an^{\alpha}_{\beta}-n^{\alpha}_{\beta}a\rightarrow 0,\quad \tilde{\varphi}(n^{\alpha}_{\beta})-1\rightarrow 0,\quad (a\in{\mathcal{A}}).$$ By Banach-Alaghlou's theorem, $(n^{\alpha}_{\beta})_{\beta}$ has a $wk^{*}$-limit point say $N\in \mathcal{A}^{**}$. One can see that $$aN=Na,\quad \tilde{\varphi}(N)=1,\quad (a\in \mathcal{A}).$$
	It follows that $\mathcal{A}$ is $\varphi$-inner amenable.
\end{proof}
Using the similar argument as in the proof of the theorem \ref{T5.1}, we have the following corollary:
\begin{cor}\label{C5.2}
	Let $\mathcal{A}$ be a Banach algebra and $\varphi\in\Delta(\mathcal{A})$. If $\mathcal{A}$ has an ultra central approximate identity, then $\mathcal{A}$ is $\varphi$-inner amenable.
\end{cor}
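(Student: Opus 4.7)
The plan is to mirror the proof of Theorem \ref{T5.1} one dual level lower, applying the ultra central approximate identity $(e_\alpha)$ directly in $\mathcal{A}^{**}$ rather than in $\mathcal{A}^{****}$. First I would extend $\varphi$ to $\tilde{\varphi}\in\Delta(\mathcal{A}^{**})$ via $\tilde{\varphi}(\lambda)=\lambda(\varphi)$. Picking any $a_0\in\mathcal{A}$ with $\varphi(a_0)=1$, the multiplicativity of $\tilde{\varphi}$ together with $e_\alpha a_0\to a_0$ yields
\[
\tilde{\varphi}(e_\alpha)=\tilde{\varphi}(e_\alpha)\varphi(a_0)=\tilde{\varphi}(e_\alpha a_0)\longrightarrow\tilde{\varphi}(a_0)=1,
\]
so after discarding an initial segment and rescaling I may assume $\tilde{\varphi}(e_\alpha)=1$ for every $\alpha$.

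Next I would invoke Goldstine's theorem for each $e_\alpha$, producing a net $(m^\alpha_\beta)_\beta$ in $\mathcal{A}$ with $\|m^\alpha_\beta\|\le\|e_\alpha\|$ and $m^\alpha_\beta\overset{wk^*}{\longrightarrow}e_\alpha$ in $\mathcal{A}^{**}$. For each $a\in\mathcal{A}$, both $n\mapsto a\cdot n$ and $n\mapsto n\cdot a$ are $wk^*$-continuous on $\mathcal{A}^{**}$ under the first Arens product, so $am^\alpha_\beta-m^\alpha_\beta a\overset{wk^*}{\longrightarrow}ae_\alpha-e_\alpha a=0$; since this difference already lies in $\mathcal{A}$, the convergence is weak in $\mathcal{A}$. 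Simultaneously, $\varphi(m^\alpha_\beta)=\tilde{\varphi}(m^\alpha_\beta)\to\tilde{\varphi}(e_\alpha)=1$.

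Now for each finite $F=\{a_1,\ldots,a_n\}\subseteq\mathcal{A}$ I form the convex set
\[
V_F=\bigl\{(a_1 b-ba_1,\ldots,a_n b-ba_n,\varphi(b)-1):b\in\mathcal{A},\ \|b\|\le\|e_\alpha\|\bigr\}\subseteq\bigoplus_{i=1}^n\mathcal{A}\oplus_1\mathbb{C},
\]
whose weak closure contains $0$ by the previous step. By Mazur's theorem $0\in\overline{V_F}^{\|\cdot\|}$, and reindexing over pairs $(F,\varepsilon)$ produces a bounded net $(n_\gamma)$ in $\mathcal{A}$ satisfying $an_\gamma-n_\gamma a\to 0$ in norm for every $a\in\mathcal{A}$ and $\varphi(n_\gamma)\to 1$. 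A $wk^*$-cluster point $N\in\mathcal{A}^{**}$ of $(n_\gamma)$, furnished by Banach--Alaoglu, then satisfies $aN=Na$ and $\tilde{\varphi}(N)=1$, so $N$ is exactly the functional required by the definition of $\varphi$-inner amenability; equivalently, $(n_\gamma)$ is the bounded net characterisation of \cite[Theorem 2.1]{Jab:11}.

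The only real work is bookkeeping: one must verify that left and right multiplication by $a\in\mathcal{A}$ are $wk^*$-continuous on $\mathcal{A}^{**}$ under the first Arens product (this is what lets the commutator pass through the Goldstine approximation), and that the multiplicative functional $\tilde{\varphi}$ detects $e_\alpha$ through the factorisation $\tilde{\varphi}(e_\alpha a_0)=\tilde{\varphi}(e_\alpha)\varphi(a_0)$. Once these two observations are in place, every remaining step is a transcription of the argument in Theorem \ref{T5.1} with $\mathcal{A}^{**}$ replaced by $\mathcal{A}$.
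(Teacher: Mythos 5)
Your proposal is correct and is precisely what the paper intends: the paper gives no written proof of Corollary \ref{C5.2} beyond the remark that one should repeat the argument of Theorem \ref{T5.1} one dual level lower, and your transcription (extending $\varphi$ to $\tilde{\varphi}\in\Delta(\mathcal{A}^{**})$, normalising $\tilde{\varphi}(e_\alpha)$, approximating $e_\alpha$ from $\mathcal{A}$ via Goldstine, passing weak to norm by Mazur, and extracting a $wk^*$-cluster point) is exactly that argument. The two observations you flag as needing verification --- the $wk^*$-continuity of the relevant one-sided multiplications for the first Arens product and the multiplicativity of $\tilde{\varphi}$ --- are the same ones implicitly used in the paper's proof of Theorem \ref{T5.1}, so nothing further is required.
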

\begin{Example}
	Let $\mathcal{A}=\left(\begin{array}{cccc}\mathbb{C}&\mathbb{C}&\mathbb{C}&\cdots\\
	0&0&0&\cdots\\
	\colon&\cdots&\colon&\cdots
	\end{array}
	\right)_{\mathbb{N}\times\mathbb{N}}$. With the finite $\ell^{1}$-norm and matrix multiplication, $\mathcal{A}$ becomes a Banach algebra. We claim that $\mathcal{A}^{**}$ doesn't have an ultra central approximate identity. We go toward a contradiction and suppose that $\mathcal{A}^{**}$ has an ultra central approximate identity. Define $\varphi:\mathcal{A}\rightarrow \mathbb{C}$ by $\varphi((a_{i,j})_{i,j\in\mathbb{N}})=a_{11}$ for every $(a_{i,j})_{i,j\in\mathbb{N}}\in{\mathcal{A}}$. It is easy to see that $\varphi\in\Delta(\mathcal{A})$. Thus by Theorem \ref{T5.1}, $\mathcal{A}$ is $\varphi$-inner amenable. By \cite[Theorem 2.1]{Jab:11} there exists a bounded net  $n_{\alpha}=\left(\begin{array}{cccc}n^{1}_{\alpha}&n^{2}_{\alpha}&n^{3}_{\alpha}&\cdots\\
	0&0&0&\cdots\\
	\colon&\cdots&\colon&\cdots
	\end{array}
	\right)_{\mathbb{N}\times\mathbb{N}}$ in $\mathcal{A}$ such that
	\begin{equation}\label{eq5.2}
	an_{\alpha}-n_{\alpha}a\rightarrow 0,\quad \varphi(n_{\alpha})=n^{1}_{\alpha}\rightarrow 1,
	\end{equation}
	 for all $a\in \mathcal{A}$. Put $a_{1}=\left(\begin{array}{cccc}0&1&0&\cdots\\
	0&0&0&\cdots\\
	\colon&\cdots&\colon&\cdots
	\end{array}
	\right)_{\mathbb{N}\times\mathbb{N}}$ instead of $a$ in (\ref{eq5.2}), we have $n^{1}_{\alpha}\rightarrow 0$ which is a contradiction. So $\mathcal{A}^{**}$ doesn't have an ultra central approximate identity.
\end{Example}
\begin{Example}
	Let $S$ be a left zero semigroup with $|S|\geq 2$, that is, a semigroup with $st=s,$ for all $s,t\in S.$ We claim that the semigroup algebra $\ell^{1}(S)$ doesn't have an ultra central approximate identity. Suppose conversely that $\ell^{1}(S)$ has an ultra central approximate identity. It is easy to see that $fg=\varphi_{S}(g)f,$ where $\varphi_{S}$ is denoted for the augmentation character on $\ell^{1}(S)$ \cite[Page 54]{Dales:2010}. Applying Corollary \ref{C5.2}, follows that $\ell^{1}(S)$ is $\varphi_{S}$-inner amenable. Then by \cite[Theorem 2.1]{Jab:11} there exists a bounded net  $(n_{\alpha})$ in $\ell^{1}(S)$ such that
	\begin{equation}\label{eq5.3}
fn_{\alpha}-n_{\alpha}f\rightarrow 0,\quad \phi_{S}(n_{\alpha})\rightarrow 1,
	\end{equation}  
	 for all $f\in \ell^{1}(S).$ Since $|S|\geq 2$, set $f=\delta_{s_{1}}$ and $f=\delta_{s_{2}}$ and put $f$ in (\ref{eq5.3}) we have $n_{\alpha}\rightarrow \delta_{s_{1}}$ and $n_{\alpha}\rightarrow \delta_{s_{2}}$ which is a contradiction. Thus $\ell^{1}(S)$ doesn't have an ultra central approximate identity.
\end{Example}
We present some notions of semigroup theory. The semigroup $S$ is called left cancellative, if for every $a,b,c$ in $S$, 
\begin{equation*}
ca=cb\Rightarrow a=b.
\end{equation*}
The semigroup $S$ is called regular semigroup, if for every $s\in{S}$ there exists $s^*\in{S}$ such that $s=ss^*s$ and $s^*=s^*ss^*$ \cite{How:76}.
\begin{Theorem}
	Let $S$ be a left cancellative regular semigroup. Then ${\ell^{1}(S)}^{**}$ has an ultra central approximat identity if and only if $S$ is a group.
\end{Theorem}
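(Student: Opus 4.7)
The plan is to treat the two directions separately. The reverse implication is immediate: if $S$ is a group with identity $e$, then $\delta_e$ is a two-sided identity for $\ell^1(S)$, so its canonical image in $\ell^1(S)^{**}$ is an identity and the constant net $(\delta_e)$ serves trivially as an ultra central approximate identity.

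For the forward direction, the strategy is to show that $S$ has exactly one idempotent; a short argument then promotes a regular semigroup with a unique idempotent to a group. I would begin by applying Theorem \ref{T5.1} with the augmentation character $\varphi_S \in \Delta(\ell^1(S))$, and using \cite[Theorem 2.1]{Jab:11} to extract a bounded net $(n_\alpha)$ in $\ell^1(S)$ satisfying $f n_\alpha - n_\alpha f \to 0$ in $\ell^1$-norm for every $f \in \ell^1(S)$ and $\varphi_S(n_\alpha) \to 1$. The key structural input from left cancellativity together with regularity is that every idempotent $e$ of $S$ is a \emph{left identity}: for any $s \in S$ with regular inverse $s^*$, rewriting $s \cdot t = (ss^*s)t = s(s^*st)$ and applying left cancellation yields $s^*st = t$, so $s^*s$ is a left identity; since an idempotent is its own regular inverse, this specializes to $et = t$ for every $t \in S$.

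Now suppose toward a contradiction that $S$ contains two distinct idempotents $e \neq e'$. Because both are left identities, $\delta_e n_\alpha = \delta_{e'} n_\alpha = n_\alpha$, and asymptotic centrality of $(n_\alpha)$ gives $n_\alpha \delta_e \to n_\alpha$ and $n_\alpha \delta_{e'} \to n_\alpha$ in norm, hence $n_\alpha \delta_e - n_\alpha \delta_{e'} \to 0$. The decisive observation is that these two nets are supported on disjoint subsets of $S$: the first in $Se = \{t \in S : te = t\}$ and the second in $Se'$, while a common point $t$ would satisfy $te = t = te'$, forcing $e = e'$ by left cancellation. Therefore the $\ell^1$-norms add, so $\|n_\alpha \delta_e\|_1 + \|n_\alpha \delta_{e'}\|_1 \to 0$; combined with $\|n_\alpha - n_\alpha \delta_e\|_1 \to 0$ this forces $\|n_\alpha\|_1 \to 0$, contradicting $|\varphi_S(n_\alpha)| \leq \|n_\alpha\|_1$ and $\varphi_S(n_\alpha) \to 1$. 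Thus $S$ has a unique idempotent $e$. For any $s \in S$ with regular inverse $s^*$, both $s^*s$ and $ss^*$ are idempotent and hence equal $e$, so $s = ss^*s$ reduces to $s = es = se$; this makes $e$ a two-sided identity and $s^*$ a two-sided inverse of $s$, so $S$ is a group.

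The delicate ingredients, as I see it, are the two cancellation arguments that underlie the whole derivation: that every idempotent is a left identity, and that $Se \cap Se' = \emptyset$ for distinct idempotents. Once these structural facts are in hand, the contradiction drops out by splitting the $\ell^1$-norm across disjoint supports.
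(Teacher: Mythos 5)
Your proof is correct, but it takes a genuinely different route from the paper's. The paper works directly with the ultra central approximate identity of $\ell^1(S)^{**}$: it pushes the net down from $\ell^1(S)^{****}$ to $\ell^1(S)$ via Goldstein's theorem, an iterated-limit argument and two applications of Mazur's theorem, obtaining an asymptotically central norm approximate identity $(E_\gamma)$ for $\ell^1(S)$; it then shows by an $\ell^1$-support estimate that some $t\in S$ satisfies $st=s$, upgrades $t$ to a left identity by left cancellation, proves $E_\gamma\to\delta_t$ so that $t$ is a two-sided identity, and finally cancels in $ss^*s=st$ and $s^*ss^*=s^*t$ to produce inverses. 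You instead invoke Theorem \ref{T5.1} together with \cite[Theorem 2.1]{Jab:11} to reduce everything to a bounded asymptotically central net $(n_\alpha)$ with $\varphi_S(n_\alpha)\to 1$, and then prove the purely algebraic statement that a left cancellative regular semigroup with a unique idempotent is a group, extracting uniqueness of the idempotent from the disjointness of $Se$ and $Se'$ and the additivity of the $\ell^1$-norm over disjoint supports. Your two structural lemmas are sound (in fact, in a left cancellative semigroup $e(et)=(ee)t=et$ already shows every idempotent is a left identity, without using regularity). What your route buys is economy --- the functional-analytic descent is done once and for all in Theorem \ref{T5.1} --- and a formally stronger conclusion, namely that $\varphi_S$-inner amenability of $\ell^1(S)$ alone forces a left cancellative regular semigroup to be a group. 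What the paper's route buys is the intermediate fact that $\ell^1(S)$ has a norm approximate identity converging to the point mass at the identity, which exhibits the identity element explicitly.
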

\begin{proof}
	Suppose that  ${\ell^{1}(S)}^{**}$ has an ultra central approximate identity. Then there exists a net $(e_{\alpha})$ in ${\ell^{1}(S)}^{****}$ such that $ae_{\alpha}=e_{\alpha}a$ and $e_{\alpha}a\rightarrow a, $ for every $a\in {\ell^{1}(S)}^{**}$. By Goldstein's Theorem there exists a net $(x^{\alpha}_{\beta})$ in ${\ell^{1}(S)}^{**}$  such that $wk^*\hbox{-}\lim\limits_{\beta}x^{\alpha}_{\beta}=e_{\alpha}$ in ${\ell^{1}(S)}^{****}$. Thus $wk^*\hbox{-}\lim\limits_{\beta}ax^{\alpha}_{\beta}-x^{\alpha}_{\beta}a=0$ in ${\ell^{1}(S)}^{****}$ for every  $a\in \ell^{1}(S)$. So 
	\begin{equation*}
	wk^*\hbox{-}\lim\limits_{\alpha}wk^*\hbox{-}\lim\limits_{\beta}ax^{\alpha}_{\beta}-x^{\alpha}_{\beta}a=0,\quad wk^*\hbox{-}\lim\limits_{\alpha}wk^*\hbox{-}\lim\limits_{\beta}ax^{\alpha}_{\beta}=a,\quad(a\in \ell^{1}(S)),
	\end{equation*}
	 in ${\ell^{1}(S)}^{****}$. Applying iterated limit theorem \cite[Page 69]{kelley:75}, we have  a net $(E_{\gamma})$ in ${\ell^{1}(S)}^{**}$ such that
	 \begin{equation*}
		wk^*\hbox{-}\lim\limits_{\gamma}aE_{\gamma}-E_{\gamma}a=0,\quad 	wk^*\hbox{-}\lim\limits_{\gamma}aE_{\gamma}=a,\quad(a\in \ell^{1}(S)),
	 \end{equation*} 
in ${\ell^{1}(S)}^{****}$. It is easy to see that
\begin{equation*}
wk\hbox{-}\lim\limits_{\gamma}aE_{\gamma}-E_{\gamma}a=0,\quad wk\hbox{-}\lim\limits_{\gamma}aE_{\gamma}=a,\quad(a\in \ell^{1}(S)),
\end{equation*}
 in ${\ell^{1}(S)}^{**}$. Let $F=\{a_{1},a_{2},...,a_{n}\}$ be a finite subset of $\ell^{1}(S)$. Set
 \begin{equation*}
V=\{(a_{1}b-ba_{1},a_{2}b-ba_{2},...,a_{n}b-ba_{n},a_{1}b-a_{1},a_{2}b-a_{2},...,a_{n}b-a_{n})|b\in{\ell^{1}(S)}^{**}\}.
 \end{equation*}
  It is easy to see that $V$ is a convex subset of  $\prod\limits_{i=1}^{2n}\mathcal{A}_{i}$, where for every $i$, $\mathcal{A}_{i}={\ell^{1}(S)}^{**}$ and $(0,0,...,0)\in\overline{V}^{w}=\overline{V}^{||\cdot||}$. So there exists a net (say again $(E_{\gamma})$) in  ${\ell^{1}(S)}^{**} $  which
  \begin{equation*}
 \lim\limits_{\gamma}aE_{\gamma}-E_{\gamma}a=0,\quad \lim\limits_{\gamma}aE_{\gamma}=a,
  \end{equation*}
 for every $a\in \ell^{1}(S)$. Using a similar arguments we may suppose that $(E_{\gamma})$ is a net in $\ell^{1}(S)$ such that 
  \begin{equation*}
\lim\limits_{\gamma}aE_{\gamma}-E_{\gamma}a=0,\quad \lim\limits_{\gamma}aE_{\gamma}=a,
\end{equation*} 
for every $a\in \ell^{1}(S)$. So $ \ell^{1}(S)$ has an approximate identity. It follows that $\delta_{s}E_{\gamma}\rightarrow \delta_{s}$. Then there exists $t\in  S$ such that $st=s$. Suppose conversely that for every $t\in  S$, $st\neq s$. Assume that $E_{\gamma}$ has a form $\sum\limits_{t\in{S}} b^\gamma_t \delta_{t}$, where $b^\gamma_t \in{\mathbb{C}}$ for every $t\in{S}$. Let $\varepsilon=\frac{1}{2}$. Since $\delta_{s}E_{\gamma}\rightarrow \delta_{s}$, there exists $\gamma_0$ such that $\Vert\sum\limits_{t\in{S}} b^{\gamma_0}_t \delta_{st}-\delta_s\Vert_1\leq\frac{1}{2}$. Since $st\neq s$, $\sum\limits_{t\in{S}}\vert b^{\gamma_0}_t\vert+1\leq\frac{1}{2}$. So $\sum\limits_{t\in{S}}\vert b^{\gamma_0}_t\vert\leq-\frac{1}{2}$, which is a contradiction. Thus for every $s_{0}\in S$ we have $sts_{0}=ss_{0}$. The left cancellativity of $S$ implies that $ts_{0}=s_{0}$. Hence $t$ is a left identity for $S$. So 
\begin{equation}\label{e5.4}
\delta_{t}=\lim\limits_{\gamma}\delta_{t}E_{\gamma}=\lim\limits_{\gamma}E_{\gamma}.
\end{equation}
  For every $s\in S$ by (\ref{e5.4}), $\lim\limits_{\gamma}\delta_{s}E_{\gamma}=\delta_{st}$. On the other hand, $(E_{\gamma})$ is an approximate identity for $ \ell^{1}(S)$. So $\lim\limits_{\gamma}\delta_{s}E_{\gamma}=\delta_{s}$. It follows that  $s=st$, so $t$ is the identity of $S$. Since $S$ is a regular, for every $s\in S$ there exists $s^{*}\in S$ such that $$ss^{*}s=s=st,\quad s^{*}ss^{*}=s^{*}=s^{*}t.$$ Again the left cancellativity of $S$ implies that $s^{*}s=ss^{*}=t.$ It deduces that $S$ is a group.
	
	For converse, if $S$ is a group, then $\ell^{1}(S) $ has an identity. So ${\ell^{1}(S)}^{**}$ has an identity. It follows that $\ell^{1}(S)^{**}$ has an ultra central approximate identity.
\end{proof}
Let $\mathcal{A}$ and $\mathcal{B}$ be Banach algebras with $\varphi\in\Delta(\mathcal{B}).$ Consider the Cartesian product $\mathcal{A}\times \mathcal{B}$. Equip $A\times \mathcal{B}$ with the norm $||(a,b)||=||a||_{\mathcal{A}}+||b||_{\mathcal{B}}$ and $\varphi$-Lau product, that is, the product which is defined  by 
$$(a,b)(c,d)=(ac+\varphi(d)a+\varphi(b)c,bd),\quad (a,b,c,d\in \mathcal{A}).$$ Then $\mathcal{A}\times \mathcal{B}$ becomes a Banach algebra which we denote it with $\mathcal{A}\times_{\varphi}\mathcal{B}$. Note that $({\mathcal{A}\times_{\varphi}\mathcal{B}})^{**}$ (with the first Arens product) is isometrically isomorphism  with  $\mathcal{A}^{**}\times_{\varphi}\mathcal{B}^{**}$, and also for every $(m,n)$ and $(a,b)$ in  $\mathcal{A}^{**}\times_{\varphi}\mathcal{B}^{**}$ we have $$(m,n)(a,b)=(ma+n(\phi)a+b(\phi)m,nb),$$ for more information about the Lau product and its generalization see \cite{Lau:83} and \cite{San:07}.
\begin{Proposition}
	Let $\mathcal{A}$ and $\mathcal{B}$ be Banach algebras with $\varphi\in\Delta(\mathcal{B}).$ If  $\mathcal{A}\times_{\varphi}\mathcal{B}$ has an ultra central approximate identity, then $\mathcal{B}$ has an ultra central approximate identity.
\end{Proposition}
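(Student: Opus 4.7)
The plan is to project the given ultra central approximate identity of $\mathcal{A}\times_\varphi\mathcal{B}$ onto its second coordinate via the isometric isomorphism $(\mathcal{A}\times_\varphi\mathcal{B})^{**}\cong\mathcal{A}^{**}\times_\varphi\mathcal{B}^{**}$ (with the first Arens product) recalled just before the proposition. First, let $(e_\alpha)$ be a net in $(\mathcal{A}\times_\varphi\mathcal{B})^{**}$ witnessing the ultra central approximate identity, and decompose $e_\alpha=(m_\alpha,n_\alpha)$ with $m_\alpha\in\mathcal{A}^{**}$ and $n_\alpha\in\mathcal{B}^{**}$. My goal is to show that $(n_\alpha)$ is itself an ultra central approximate identity for $\mathcal{B}$.

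Next I will probe the hypothesis using elements of the special form $(0,b)\in\mathcal{A}\times_\varphi\mathcal{B}$ with $b\in\mathcal{B}$. Applying the $\varphi$-Lau product formula recorded in the paper, a direct computation gives
\begin{equation*}
(0,b)(m_\alpha,n_\alpha)=(\varphi(b)\,m_\alpha,\,bn_\alpha),\qquad (m_\alpha,n_\alpha)(0,b)=(\varphi(b)\,m_\alpha,\,n_\alpha b).
\end{equation*}
The commutation relation $(0,b)e_\alpha=e_\alpha(0,b)$ therefore forces $bn_\alpha=n_\alpha b$ in $\mathcal{B}^{**}$, while the convergence $e_\alpha(0,b)\to(0,b)$ in $\mathcal{A}^{**}\times_\varphi\mathcal{B}^{**}$ yields, on the second coordinate, $n_\alpha b\to b$ in $\mathcal{B}^{**}$.

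Since $b\in\mathcal{B}$ was arbitrary, these two conclusions together show that $(n_\alpha)$ is a net in $\mathcal{B}^{**}$ with $bn_\alpha=n_\alpha b$ and $n_\alpha b\to b$ for every $b\in\mathcal{B}$; by definition, $\mathcal{B}$ has an ultra central approximate identity. The argument is essentially mechanical once the correct coordinate is isolated, so I do not expect a serious obstacle; the only point to take care of is to invoke the isometric identification $(\mathcal{A}\times_\varphi\mathcal{B})^{**}\cong\mathcal{A}^{**}\times_\varphi\mathcal{B}^{**}$ and the explicit formula for the first Arens product given just before the proposition, so that the component computations are justified.
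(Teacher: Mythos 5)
Your proof is correct and follows essentially the same route as the paper: decompose the ultra central approximate identity in $\mathcal{A}^{**}\times_\varphi\mathcal{B}^{**}$ and read off the second coordinate, the only cosmetic difference being that you test against $(0,b)$ while the paper uses general $(a,b)$. Both computations yield $bn_\alpha=n_\alpha b\to b$, so the argument is sound.
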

\begin{proof}
	Let $\mathcal{A}\times_{\varphi}\mathcal{B}$ has an ultra central approximate identity. Then there exists a net $(m_{\alpha},n_{\alpha})$ in
	$\mathcal{A}^{**}\times_{\varphi}\mathcal{B}^{**}$ such that $$(m_{\alpha},n_{\alpha})(a,b)=(a,b)(m_{\alpha},n_{\alpha}),\quad (a,b)(m_{\alpha},n_{\alpha})\rightarrow (a,b)\qquad (a\in \mathcal{A},b\in \mathcal{B}).$$
	Thus we have   $$(m_{\alpha}a+n_{\alpha}(\varphi)a+\varphi(b)m_{\alpha},n_{\alpha}b)=(am_{\alpha}+n_{\alpha}(\phi)a+\varphi(b)m_{\alpha},bn_{\alpha})\rightarrow (a,b),\quad (a\in \mathcal{A},b\in \mathcal{B}).$$ Therefore $n_{\alpha}b=bn_{\alpha}\rightarrow b$ for all $b\in \mathcal{B}.$ So $\mathcal{B}$ has an ultra central approximate identity.
\end{proof}
Let $\mathcal{A}$ be a Banach algebra and $X$ be a Banach $\mathcal{A}$-bimodule. The module extension $\mathcal{A}\oplus X$ is a Banach algebra with the following multiplication 
$$(a,x)(b,y)=(ab,ay+bx),\quad (a,b\in \mathcal{A},x,y\in X)$$
and the norm $||(a,x)||=||a||_{\mathcal{A}}+||x||_{X}.$ The Banach $\mathcal{A}$-bimodule $X$ is called commutative if $ax=xa$ for every $a\in \mathcal{A}$ and $x\in X$.
It is easy to see that $(\mathcal{A}\oplus X)^{**}$ can be identified with $\mathcal{A}^{**}\oplus_{1} X^{**}$ (as a Banach space) and the first Arens product on  $(\mathcal{A}\oplus X)^{**}$  is given by 
$$(m,\lambda)(n,\mu)=(mn,m\mu+\lambda n),\quad (m,n\in \mathcal{A}^{**}, \lambda, \mu \in X^{**}).$$ For more information about the module extension see \cite{zhang:02}.
\begin{Proposition}
	Let $\mathcal{A}$ be a Banach algebra and $X$ be a commutative Banach $\mathcal{A}$-bimodule. If  the module extension $\mathcal{A}\oplus X$ has an ultra central approximate identity, then $\mathcal{A}$ has an ultra central approximate identity.
\end{Proposition}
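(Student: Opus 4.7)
The plan is to project the given ultra central approximate identity in $(\mathcal{A}\oplus X)^{**}$ onto its first coordinate and verify that it satisfies the two axioms for $\mathcal{A}$.

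First, I would invoke the identification $(\mathcal{A}\oplus X)^{**}\cong \mathcal{A}^{**}\oplus_{1} X^{**}$ together with the explicit form of the first Arens product recalled just before the statement. The hypothesis then produces a net $(e_\alpha,f_\alpha)$ in $\mathcal{A}^{**}\oplus_{1} X^{**}$ such that for every $(a,x)\in\mathcal{A}\oplus X$,
\begin{equation*}
(a,x)(e_\alpha,f_\alpha)=(e_\alpha,f_\alpha)(a,x),\qquad (e_\alpha,f_\alpha)(a,x)\longrightarrow (a,x).
\end{equation*}

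Next, I would specialize to elements of the form $(a,0)$ with $a\in\mathcal{A}$. A direct computation using the displayed formula gives
\begin{equation*}
(a,0)(e_\alpha,f_\alpha)=(ae_\alpha,\,af_\alpha),\qquad (e_\alpha,f_\alpha)(a,0)=(e_\alpha a,\,f_\alpha a).
\end{equation*}
Equating coordinates in the commutativity relation yields $ae_\alpha=e_\alpha a$ in $\mathcal{A}^{**}$. Projecting the convergence $(e_\alpha,f_\alpha)(a,0)\to (a,0)$ onto the first coordinate (which is continuous for the $\ell^{1}$-direct sum norm) gives $e_\alpha a\to a$ in $\mathcal{A}^{**}$. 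Consequently $(e_\alpha)$ is a net in $\mathcal{A}^{**}$ with $ae_\alpha=e_\alpha a$ and $e_\alpha a\to a$ for every $a\in\mathcal{A}$, which is precisely the definition of an ultra central approximate identity for $\mathcal{A}$.

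There is essentially no obstacle here: once the identification of $(\mathcal{A}\oplus X)^{**}$ with $\mathcal{A}^{**}\oplus_{1} X^{**}$ and its multiplication is in hand, the argument is a one-line projection. The commutativity of the bimodule $X$ is, in fact, not needed for this direction, and I would not invoke it in the proof.
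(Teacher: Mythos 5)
Your proof is correct and is essentially the paper's own argument: both extract the first coordinate of the ultra central approximate identity of $(\mathcal{A}\oplus X)^{**}\cong\mathcal{A}^{**}\oplus_1 X^{**}$ using the displayed Arens product (the paper reads off the first coordinate from general $(a,x)$ rather than specializing to $(a,0)$, an immaterial difference). Your side remark is also accurate: the paper's proof likewise never uses the commutativity of $X$.
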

\begin{proof}
	Suppose that  the module extension $\mathcal{A}\oplus X$ has an ultra central approximate identity. Then there exists a net $(m_{\alpha},\lambda_{\alpha})$ in $(\mathcal{A}\oplus X)^{**}$ such that $$(m_{\alpha},\lambda_{\alpha})(a,x)=(a,x)(m_{\alpha},\lambda_{\alpha}),\quad (m_{\alpha},\lambda_{\alpha})(a,x)\rightarrow(a,x),\quad (a\in \mathcal{A}, x\in X).$$
	It implies that for every $a\in \mathcal{A}$ and $x\in X$,
	\begin{enumerate}
		\item [(i)]	$(m_{\alpha}a,m_{\alpha}x+\lambda_{\alpha}a)=(am_{\alpha},a\lambda_{\alpha}+xm_{\alpha})$,
		\item[(ii)] $(m_{\alpha}a,m_{\alpha}x+\lambda_{\alpha}a)\longrightarrow(a,x)$.
	\end{enumerate}
 So $m_{\alpha}a=am_{\alpha}$ and $m_{\alpha}a\rightarrow a$. Thus $\mathcal{A}$ has an ultra central approximate identity.
\end{proof}
\begin{small}
	
\end{small}
\end{document}